\documentclass[a4paper,11pt]{amsart}
\usepackage{amsmath,amsthm,amssymb,enumitem}
\usepackage[hidelinks]{hyperref}
\usepackage[T1]{fontenc}
\usepackage{lmodern}

\newtheorem{theorem}{Theorem}[section]
\newtheorem{proposition}[theorem]{Proposition}
\newtheorem{lemma}[theorem]{Lemma}
\newtheorem{corollary}[theorem]{Corollary}

\theoremstyle{definition}
\newtheorem{definition}[theorem]{Definition}
\newtheorem{problem}[theorem]{Problem}

\numberwithin{equation}{section}
\setlist[enumerate,1]{label=\upshape(\arabic*)}

\newcommand{\id}{\mathrm{id}}
\newcommand{\D}{\mathbb D}
\newcommand{\C}{\mathbb C}
\newcommand{\B}{\mathbb B}
\newcommand{\Pj}{\mathbb P}
\newcommand{\cO}{\mathcal O}
\newcommand{\Spl}{\mathrm{Spl}}
\DeclareMathOperator{\rank}{rank}
\DeclareMathOperator{\dist}{dist}
\DeclareMathOperator{\Hom}{Hom}

\begin{document}

\title[Every elliptic domain is meromorphically pseudoconvex]{Every Gromov elliptic domain is meromorphically pseudoconvex}
\author{Yuta Kusakabe}
\address{Faculty of Mathematics, Kyushu University, Fukuoka 819-0395, Japan}
\email{kusakabe@math.kyushu-u.ac.jp}
\subjclass[2020]{Primary 32Q56; Secondary 32D15, 32E40, 32T05}
\keywords{Oka manifold, Gromov ellipticity, dominating spray, pseudoconvexity, meromorphic pseudoconvexity, Hartogs figure, dual Levi problem}

\begin{abstract}
We prove that every Gromov elliptic domain $\Omega$ in a complex manifold has the following property, which we call meromorphic pseudoconvexity: each boundary point has a neighborhood $V$ such that $\Omega\cap V$ becomes Stein after removing a set which is, locally in $\Omega\cap V$, the union of the polar sets of finitely many meromorphic functions on $\Omega\cap V$.
Since meromorphic functions on Hartogs figures extend to their hulls, it follows that, for a domain $\Omega=\operatorname{int}\overline\Omega$ in a Stein manifold, Gromov ellipticity is equivalent to being Oka and Stein, and also to being Oka and admitting a complete K\"ahler metric.
This complements a theorem of Matsumura and the author that every weakly pseudoconvex Oka manifold admitting a positive line bundle is elliptic.
As applications, we show that the Oka manifolds $\C^n\setminus\overline{\B^n}$ and $\C^n\setminus\mathbb R^n$, $n\geq2$, are not elliptic, and that no proper domain with Lipschitz boundary in $\Pj^n$, $n\geq2$, is elliptic.
\end{abstract}

\maketitle

\section{Introduction}

Stein manifolds are the natural domains of holomorphic functions, and the solution of the Levi problem characterizes Stein domains in a Stein manifold by pseudoconvexity at the boundary \cite{Docquier1960MA,Oka1953JJM}.
Dually, Oka manifolds are the natural targets of holomorphic maps \cite{Forstneric2017}, and the \emph{dual Levi problem} asks which geometric conditions on the boundary of a domain make it Oka (cf.\ \cite[Problem~3.10]{Forstneric2026ICM}, where part (a) is called the inverse Levi problem).
The known sufficient conditions are of a pseudoconcave nature: complements of compact holomorphically convex sets in $\C^n$, $n\geq2$, are Oka \cite{Kusakabe2024AoM}, and so are pseudoconcave tubes in certain line bundles \cite{Forstneric2025MA}.
In the other direction, an Oka manifold carries no nonconstant plurisubharmonic function bounded above \cite[Proposition~7.1.9]{Forstneric2017}, whereas a domain with a smooth strictly pseudoconvex boundary point carries such a function, so an Oka domain has no such boundary points (cf.\ \cite[Example~5]{Forstneric2014MZ}).

A fundamental sufficient condition for the Oka property is Gromov ellipticity \cite{Gromov1989JAMS}.
A complex manifold $Y$ is \emph{Gromov elliptic}, or simply \emph{elliptic}, if it admits a \emph{dominating spray}: a holomorphic vector bundle $p:E\to Y$ with a holomorphic map $s:E\to Y$ such that $s(0_y)=y$ and $ds_{0_y}(E_y)=T_{Y,y}$ for every $y\in Y$, where the fiber $E_y$ is identified with its tangent space at $0_y$.
Gromov asked whether every Oka manifold is elliptic \cite[3.2.A$''$]{Gromov1989JAMS}; the first nonelliptic Oka manifolds were found in \cite{Kusakabe2020PAMS}.
Since ellipticity is stronger than the Oka property, one might expect elliptic domains to be even more pseudoconcave.
We show that the opposite is true: what ellipticity adds is pseudoconvexity, up to the poles of meromorphic functions.
Some exceptional set is unavoidable, since the complement of a nonempty algebraic subset of codimension at least two in $\C^n$ is elliptic \cite{Gromov1989JAMS} but not pseudoconvex.

\begin{definition}
	\label{definition:meromorphic}
	Let $\Omega$ be a domain in a complex manifold $Y$.
	Here and throughout, the \emph{polar set} of a meromorphic function is the set of points where it is not holomorphic, including points of indeterminacy.
	We say that $\Omega$ is \emph{meromorphically pseudoconvex in} an open set $V\subset Y$ if $(\Omega\cap V)\setminus P$ is Stein for some set $P\subset\Omega\cap V$ which is, locally in $\Omega\cap V$, the union of the polar sets of finitely many meromorphic functions on $\Omega\cap V$.
	We say that $\Omega$ is \emph{meromorphically pseudoconvex} if every point of $\partial\Omega$ has an open neighborhood in which $\Omega$ is meromorphically pseudoconvex.
\end{definition}

Meromorphic pseudoconvexity should not be confused with \emph{meromorphic convexity}, which concerns hulls with respect to meromorphic functions \cite{Coltoiu1999,Boudreaux-Shafikov}.
For open subsets of Stein manifolds, meromorphic convexity with respect to their own meromorphic functions is equivalent to the Stein property \cite[Proposition~6.10]{Biller2007}, as is being a domain of meromorphy \cite{Kajiwara-Sakai1967}.

The set $P$ in Definition~\ref{definition:meromorphic} is a closed analytic subset of $\Omega\cap V$ of pure codimension one.
Taking $P=\emptyset$ near every boundary point recovers the usual notion of \emph{pseudoconvexity}.
Our main result is the following.

\begin{theorem}
	\label{theorem:main}
	Every Gromov elliptic domain in a complex manifold is meromorphically pseudoconvex.
\end{theorem}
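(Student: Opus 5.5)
Let $\Omega$ be elliptic with dominating spray $(E,p,s)$. Fix $x_0\in\partial\Omega$ and a small coordinate ball $V\ni x_0$ in $Y$, biholomorphic to a ball in $\C^n$ (hence Stein). The plan is to obtain the exceptional set $P$ from the spray, by trivializing $E$ meromorphically, and then to prove Steinness of $(\Omega\cap V)\setminus P$ via the Oka--Docquier--Grauert criterion: a domain in a Stein manifold is Stein if and only if it is locally pseudoconvex.

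\emph{Step 1: building $P$.} Choose finitely many meromorphic sections of $E$ and of $E^*$ over $\Omega\cap V$ that generate. This would follow from $E$ being meromorphically trivial, which I do not expect in general. A weaker version should suffice. Take a holomorphic frame of the trivial bundle $T_{\Omega\cap V}\cong(\Omega\cap V)\times\C^n$. Lift it through the surjection $ds_{0}:E|_{\Omega\cap V}\to T_{\Omega\cap V}$ to sections $\sigma_1,\dots,\sigma_n$ of $E$, defined meromorphically. Then define $f(x,t)=s\bigl(\sum t_j\sigma_j(x)\bigr)$, which is holomorphic off the polar set $P$ of the $\sigma_j$ and locally given by finitely many meromorphic functions. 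I expect this step to be the main obstacle: $E|_{\Omega\cap V}$ need not admit global meromorphic sections. To handle it, I would first replace $E$ by a spray on a trivial bundle, using Gromov's composed sprays or the fact that elliptic manifolds admit sprays defined on trivial bundles after allowing meromorphic coefficients. If that fails, I would use the localization that $\Omega\cap V$ carries many meromorphic functions coming from $V$, namely rational functions in the coordinates, and take generating meromorphic sections of a coherent extension.

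\emph{Step 2: local pseudoconvexity at boundary points.} Let $U=(\Omega\cap V)\setminus P$. Points of $\partial U$ inside $\Omega\cap V$ lie on the hypersurface $P$ and are pseudoconvex boundary points, since complements of hypersurfaces are locally Stein. It remains to treat a point $z\in\partial\Omega\cap V$ and to show that $-\log\dist(\cdot,\partial\Omega)$ is plurisubharmonic near $z$ on $U$. Equivalently, Hartogs figures $H\subset U$ near $z$ should have their hulls $\widehat H$ contained in $\Omega$. Suppose instead that a Hartogs figure $H\subset U$ has hull $\widehat H\not\subset\Omega$. The map $f$ is dominating on $H$, so the map $x\mapsto f(x,t)$ for small fixed $t$ is a holomorphic self-map of $H$ into $\Omega$ that is close to the identity. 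The key observation is that the spray values $f(x,t)$ are holomorphic in $(x,t)$ with values in $\Omega\subset V\subset\C^n$, hence bounded. They therefore extend by Hartogs to $\widehat H\times\{|t|<\epsilon\}$ as holomorphic maps into $\C^n$. For the family $t\mapsto f(x,t)$ with $x$ on the extension, domination gives an open image near $x$. One then uses the spray identity $s(0_x)=x$, which persists by the identity theorem, to get $f(x,0)=x$ for $x\in\widehat H$.

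\emph{Step 3: contradiction.} By continuity of the extension and openness, the images $f(\widehat H\times\{|t|<\epsilon\})$ lie in $\overline\Omega$. Iterating the spray, which is possible because $s$ is defined on all of $E$ and the iterated sprays remain dominating, then pushes points of $\widehat H$ into $\Omega$. Concretely, consider the set of $x\in\widehat H$ with $f(x,\cdot)$ taking values in $\Omega$. This set is open and closed in $\widehat H$ by a Kontinuitätssatz-type argument applied to the family of analytic discs, giving $\widehat H\subset\Omega$. The delicate point is closedness, which requires that the polar set $P$ does not meet the discs. I would arrange this by perturbing the Hartogs figure off $P$, using that $P$ is a hypersurface and so the meromorphic data also extend across hulls (Levi's extension theorem for meromorphic functions on Hartogs figures). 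Finally, the Oka--Docquier--Grauert criterion in the Stein manifold $V$ yields that $U$ is Stein.
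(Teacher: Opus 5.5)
Your proposal has two genuine gaps. The one in Steps~2--3 is the heart of the matter. Once $f$ is extended by Hartogs to $\widehat H\times\{|t|<\epsilon\}$, you only have a holomorphic map $\hat f$ into $\C^n$ with $\hat f(x,0)=x$. Nothing forces its values into $\Omega$, or even into $\overline\Omega$; the phrase ``by continuity and openness'' has no content here. The asserted closedness of $\{x\in\widehat H: f(x,\cdot)\text{ takes values in }\Omega\}$ is exactly the statement to be proved, since $\hat f(x,0)=x$ means this set contains only points of $\Omega$. Domination and ``iterating the spray'' give no uniform control near $\partial\Omega$.

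A continuity-principle argument needs a quantitative barrier: a plurisubharmonic function $u$ on $(\Omega\cap U)\setminus P$ with $\dist(\cdot,U\setminus\Omega)\geq c_ne^{-u}$. The maximum principle then bounds $u$ on the discs by its values on their boundaries and keeps the discs uniformly away from $U\setminus\Omega$; this is what Lemma~\ref{lemma:barrier} exploits. The paper obtains $u$ from the growth of the Taylor coefficients of the restricted sprays $s(\sigma_y\xi)=y+\xi+\sum_{|\alpha|\geq2}A_\alpha(y)\xi^\alpha$. The function $v_\sigma=\bigl(\sup_{\alpha,l}\log^+|(A_\alpha)_l|/(|\alpha|-1)\bigr)^*$ is plurisubharmonic. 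A contraction argument shows that $s(\sigma_y\cdot)$ covers the ball of radius $c_ne^{-u_\sigma(y)}$ about $y$, where $u_\sigma=\max\{v_\sigma,-\log\delta_U\}$ (Lemma~\ref{lemma:covering}). Even if your Step~1 were available, it is this Taylor-coefficient barrier, not Hartogs extension of $f$, that would make Steps~2--3 work.

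Step~1, which you rightly flag, is not repaired by your suggestions either. A dominating spray on a trivial bundle, even with meromorphic coefficients, is not available in general on the non-Stein set $\Omega\cap V$, and $E|_{\Omega\cap V}$ need not extend coherently to $V$. Moreover, a holomorphic bundle on such a set may have no nonzero meromorphic section at all. A nontrivial line bundle on $\B^2\setminus\{0\}$ is an example, since divisors there extend across the origin and are principal on $\B^2$.

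The missing idea is to avoid global splittings altogether. The coefficients $A_\alpha(\sigma)$ of all restricted sprays lie in a fixed vector space, so the paper imposes affine conditions $\Lambda(\Theta_J(\sigma))=\zeta$ on them. Off a closed set $\Sigma$ locally contained in nowhere dense analytic sets, the fibers $\Gamma_y\subset\Spl_y$ have $c$ components, and every $A_\alpha$ is constant on each component by stabilization of the vertical rank of $\Theta_J$. Hence the elementary symmetric functions of the $c$ selected coefficient values, and $\max_iv_{\sigma_i}$, are independent of all choices. The symmetric functions are holomorphic off $\Sigma$ and meromorphic on $D$ (Lemma~\ref{lemma:canonical}), and $P$ is the union of their polar sets. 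The barrier then extends plurisubharmonically across $\Sigma\setminus P$: there the symmetric functions are holomorphic, and the maximum principle on vertical discs plus the bound for roots keep $v$ locally bounded above (Lemma~\ref{lemma:invisible}).

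Perturbing Hartogs figures off $P$ and invoking Levi's theorem, as you suggest, does not deal with $\Sigma$, which in general is not contained in $P$. In the paper, Levi's theorem enters only in the corollaries, not in the proof of Theorem~\ref{theorem:main}.
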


For example, $\C^2\setminus\{0\}$ is elliptic, and it becomes pseudoconvex after the polar set $\{z_1=0\}\setminus\{0\}$ of the meromorphic function $1/z_1$ is removed.

By Levi's extension theorem, every meromorphic function on a Hartogs figure extends meromorphically to its hull \cite{Levi1910} (see also \cite{Siu1974}).
Consequently, if the image of a Hartogs figure $\Phi$ near a boundary point of a meromorphically pseudoconvex domain lies in the domain, then, for every $0<\rho<1$, the part of its smaller hull $\Phi(\rho\D^n)$ outside the domain lies in a hypersurface of $\Phi(\rho\D^n)$ (Proposition~\ref{proposition:hull}).
For domains that coincide with the interior of their closure, this forces pseudoconvexity (Proposition~\ref{proposition:local}), so Theorem~\ref{theorem:main} has the following consequence.

\begin{corollary}
	\label{corollary:equivalence}
	Let $\Omega$ be a domain in a Stein manifold $Y$ with $\Omega=\operatorname{int}\overline\Omega$.
	The following conditions are equivalent:
	\begin{enumerate}
		\item $\Omega$ is Gromov elliptic;
		\item $\Omega$ is Oka and Stein;
		\item $\Omega$ is Oka and admits a complete K\"ahler metric.
	\end{enumerate}
\end{corollary}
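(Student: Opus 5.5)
We prove the cycle of implications $(1)\Rightarrow(2)\Rightarrow(3)\Rightarrow(1)$, taking the main theorem and the two propositions announced above (Propositions~\ref{proposition:hull} and~\ref{proposition:local}) as given.

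\emph{$(1)\Rightarrow(2)$.} Ellipticity implies the Oka property by Gromov's theorem. It remains to see that $\Omega$ is Stein. By Theorem~\ref{theorem:main}, every boundary point of $\Omega$ has a neighborhood $V$ in which $\Omega$ is meromorphically pseudoconvex. We take a Hartogs figure $\Phi$ near such a boundary point whose image lies in $\Omega$. Levi's extension theorem, in the form of Proposition~\ref{proposition:hull}, shows that for each $0<\rho<1$ the set $\Phi(\rho\D^n)\setminus\Omega$ is contained in a hypersurface of $\Phi(\rho\D^n)$. In particular this set has empty interior.

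Since $\Omega=\operatorname{int}\overline\Omega$, Proposition~\ref{proposition:local} upgrades this to local pseudoconvexity of $\Omega$ at every boundary point. The idea is that the exceptional set, lying in a hypersurface, cannot carry a genuine piece of the boundary unless $\Omega$ were to fill both sides of a thin set, and that is excluded by $\Omega=\operatorname{int}\overline\Omega$. Because $Y$ is Stein, the solution of the Levi problem for domains in Stein manifolds (Docquier--Grauert) then gives that $\Omega$ is Stein.

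\emph{$(2)\Rightarrow(3)$.} Every Stein manifold embeds properly into some $\C^N$. The pullback of the Euclidean metric is therefore a Kähler metric on $\Omega$. Adding $i\partial\bar\partial\psi$ for a suitable smooth strictly plurisubharmonic exhaustion $\psi$ of $\Omega$ makes it complete, for instance by the standard construction of Grauert.

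\emph{$(3)\Rightarrow(1)$.} This step carries the real content beyond the main theorem. We plan to invoke the Matsumura--Kusakabe theorem: a weakly pseudoconvex Oka manifold admitting a positive line bundle is elliptic. We need two inputs.
\begin{enumerate}
\item A positive line bundle on $\Omega$. The trivial bundle with the weight $e^{-\phi}$, where $\phi$ is the restriction to $\Omega$ of a strictly plurisubharmonic function on the Stein manifold $Y$, has positive curvature.
\item Weak pseudoconvexity of $\Omega$, i.e.\ a smooth plurisubharmonic exhaustion function.
\end{enumerate}
For the second input, a first idea is to pass through Steinness: the Oka property forbids nonconstant bounded-above plurisubharmonic functions, but that alone does not give pseudoconvexity. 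Instead we would use a known result that a domain with $\Omega=\operatorname{int}\overline\Omega$ in a Stein manifold admitting a complete Kähler metric is pseudoconvex. This follows Grauert's classical results for domains in $\C^n$ and their extensions, where the condition $\Omega=\operatorname{int}\overline\Omega$ is exactly what excludes removable thin boundary pieces. Pseudoconvexity then yields Steinness by the Levi problem, hence a plurisubharmonic exhaustion, and the cited theorem gives ellipticity.

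We expect this last step to be the main obstacle: justifying that a complete Kähler metric forces pseudoconvexity for domains satisfying $\Omega=\operatorname{int}\overline\Omega$ in a general Stein manifold, rather than only in $\C^n$. If a direct reference is unavailable, we would localize at a boundary point. In a coordinate chart the domain is locally a domain in $\C^n$, so Grauert's argument applies and gives pseudoconvexity near each boundary point, and pseudoconvexity is a local property.
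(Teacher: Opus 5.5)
Your proof is correct and is essentially the paper's. Your (1)$\Rightarrow$(2) is identical to the paper's, and your (3)$\Rightarrow$(1) passes through Steinness exactly as the paper's (3)$\Rightarrow$(2)$\Rightarrow$(1) does, except that once $\Omega$ is Stein, Gromov's theorem that Stein Oka manifolds are elliptic already suffices, so the Kusakabe--Matsumura theorem is not needed (likewise, in (2)$\Rightarrow$(3) the Euclidean metric pulled back by a proper embedding is already complete).

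The step you flag as the main obstacle is exactly the Diederich--Pflug theorem (their Theorem~1(b), cited in the paper's introduction). It is stated for domains $G=\operatorname{int}\overline G$ in Stein manifolds, so no localization is needed. Your fallback would not work as written, because the restriction of the complete metric to $\Omega\cap U$ is not complete near $\partial U$. You would first have to add a complete K\"ahler metric of the ball, such as $i\partial\bar\partial(-\log(1-|z|^2))$, and then apply the $\C^n$ result to each component of $\Omega\cap U$.
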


The condition $\Omega=\operatorname{int}\overline\Omega$ cannot be dropped, as $\C^2\setminus\{0\}$ shows, and it holds, for example, if $\Omega$ has Lipschitz boundary, that is, if near each boundary point, $\Omega$ is the region above the graph of a Lipschitz function in suitable local real coordinates.
The equivalence with (3) relies on the theorem of Diederich and Pflug that a domain $G=\operatorname{int}\overline G$ in a Stein manifold is Stein if it admits a complete K\"ahler metric \cite[Theorem~1(b)]{Diederich1981MA}.
Their hypothesis $G=\operatorname{int}\overline G$ is needed for the same reason as ours: complements of analytic subsets of Stein manifolds admit complete K\"ahler metrics \cite{Grauert1956MA}.

\begin{problem}
	\label{problem:Kahler}
	Does every Gromov elliptic domain in a Stein manifold admit a complete K\"ahler metric?
\end{problem}

In joint work with Matsumura \cite{Kusakabe-Matsumura}, we proved that every weakly pseudoconvex (weakly $1$-complete) Oka manifold admitting a positive line bundle is Gromov elliptic, generalizing the cases of Stein manifolds \cite{Gromov1989JAMS} and projective manifolds \cite{Forstneric2026MRL}.
Thus, roughly,
\begin{align*}
	\text{Gromov elliptic}&\approx\text{Oka}+\text{pseudoconvex},\quad\text{just as}\\
	\text{complete Kobayashi hyperbolic}&\approx\text{Kobayashi hyperbolic}+\text{pseudoconvex}.
\end{align*}
Every bounded domain is Kobayashi hyperbolic, so hyperbolicity alone does not imply pseudoconvexity, whereas complete hyperbolic domains are taut, hence pseudoconvex; conversely, many bounded pseudoconvex domains, including strictly pseudoconvex and convex ones, are complete hyperbolic \cite{Kobayashi1998}.

Proposition~\ref{proposition:local} gives further obstructions to ellipticity, which yield the following examples.

\begin{corollary}
	\label{corollary:applications}
	Let $n\geq2$.
	\begin{enumerate}
		\item If $K\subset\C^n$ is a nonempty compact set with $K=\overline{\operatorname{int}K}$ and connected complement, then $\C^n\setminus K$ is not Gromov elliptic.
		In particular, ${\C^n\setminus\overline{\B^n}}$ is an Oka manifold which is not Gromov elliptic.
		\item The complement $\C^n\setminus\mathbb R^n$ is an Oka manifold which is not Gromov elliptic.
		\item No domain $\Omega\subsetneq\Pj^n$ with Lipschitz boundary is Gromov elliptic.
	\end{enumerate}
\end{corollary}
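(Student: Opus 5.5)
We deduce each part from Theorem~\ref{theorem:main} through Proposition~\ref{proposition:local}, which we take as saying: if $\Omega$ is meromorphically pseudoconvex and $\Omega=\operatorname{int}\overline\Omega$, then $\Omega$ is locally pseudoconvex at every boundary point. Thus a domain with $\Omega=\operatorname{int}\overline\Omega$ that is not locally pseudoconvex at some boundary point cannot be Gromov elliptic. In each case we check two things: that $\Omega=\operatorname{int}\overline\Omega$ (or a local analogue), and that some boundary point admits a Hartogs figure in $\Omega$ whose hull leaves $\overline\Omega$, or at which pseudoconvexity otherwise fails.

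For (1), set $\Omega=\C^n\setminus K$. Since $K$ is compact with connected complement, $\Omega$ is a domain. Its closure is $\C^n\setminus\operatorname{int}K$, so
\[
\operatorname{int}\overline\Omega=\C^n\setminus\overline{\operatorname{int}K}=\C^n\setminus K=\Omega .
\]
Now suppose $\Omega$ were elliptic. By Theorem~\ref{theorem:main} and Proposition~\ref{proposition:local}, it would be locally pseudoconvex at every boundary point, hence pseudoconvex, hence Stein because $\C^n$ is Stein. But $K$ is compact, so Hartogs extension applies: every holomorphic function on $\C^n\setminus K$, $n\ge2$, extends to $\C^n$. This contradicts the Stein property, since $K\neq\emptyset$. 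For the ball, $\C^n\setminus\overline{\B^n}$ is Oka by \cite{Kusakabe2024AoM}, because $\overline{\B^n}$ is compact, convex and hence polynomially convex.

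For (2), $\C^n\setminus\mathbb R^n$ is Oka; I would cite the known result on complements of totally real affine subspaces, or check that $\mathbb R^n$ is a closed set to which the Oka criterion for complements applies. The domain is not of the form $\operatorname{int}\overline\Omega$, since its closure is all of $\C^n$. So Proposition~\ref{proposition:local} cannot be used directly, and here I would use Proposition~\ref{proposition:hull}. Take a Hartogs figure $\Phi$ near $0\in\mathbb R^n$ lying in $\C^n\setminus\mathbb R^n$. Such figures exist because $\mathbb R^n$ has real codimension $n\ge2$. If we also arrange that its hull $\Phi(\rho\D^n)$ contains an open piece of $\mathbb R^n$, then the proposition says that piece is contained in a complex hypersurface of $\Phi(\rho\D^n)$. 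This is impossible, since a totally real $n$-dimensional real submanifold cannot lie in a complex hypersurface: its tangent space would be a totally real $n$-plane inside a complex $(n-1)$-space. I expect the main obstacle to be constructing the Hartogs figure explicitly. The first choice I would try is $\Phi(z',z_n)=(z',z_n)+ia$ with a generic shift $a$, arranged so that the thin part $\{|z'|<\epsilon\}\times\D \cup \D^{n-1}\times\{r<|z_n|<1\}$ avoids $\mathbb R^n$.

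For (3), a domain with Lipschitz boundary satisfies $\Omega=\operatorname{int}\overline\Omega$. Suppose $\Omega$ were elliptic. By Proposition~\ref{proposition:local} it would be pseudoconvex in $\Pj^n$. A pseudoconvex proper domain in $\Pj^n$ is Stein, by Takeuchi's theorem. On the other hand, an elliptic manifold is Oka, and an Oka manifold admits a dominating holomorphic map from $\C^n$. A Stein manifold admitting such a map would be ruled out by its plurisubharmonic exhaustion, via Liouville, unless it were a point. Alternatively, Stein together with Oka is compatible with being $\C^n$-like, but $\Omega$ would then be a Stein domain with $\Pj^n\setminus\Omega$ having nonempty interior. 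The cleaner route is that Stein plus Oka gives ellipticity, but gives no contradiction by itself. So I would instead argue directly with Hartogs figures. Because $\Pj^n\setminus\overline\Omega$ is a nonempty open set, choose a complex line $L\cong\Pj^1$ meeting it. Consider the pencil of lines through a point of $\Omega$ and move the line until it first touches $\overline{\Pj^n\setminus\Omega}$. Near the touching point, $\Omega$ contains a Hartogs figure whose hull meets the complement in a set with interior, and this contradicts Proposition~\ref{proposition:hull}. This continuity-principle step is the delicate one.
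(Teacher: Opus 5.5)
Part (1) is correct and is the paper's argument. Parts (2) and (3) have gaps.

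\textbf{Part (3).} Once Proposition~\ref{proposition:local} gives pseudoconvexity, and Takeuchi's theorem gives the Stein property, you have no valid contradiction.

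\begin{itemize}
\item Your Liouville argument fails. A plurisubharmonic exhaustion of a Stein manifold is unbounded above, and $\C^n$ is both Stein and Oka.
\item The pencil-of-lines step cannot be repaired. Since $\Omega$ is Stein, the hull of every Hartogs figure whose image lies in $\Omega$ also lies in $\Omega$. Concretely, a ball in an affine chart of $\Pj^n$ is a proper pseudoconvex domain with smooth boundary and nonempty exterior. Your step would apply to it verbatim and give a false conclusion.
\end{itemize}

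So nothing built from pseudoconvexity and boundary geometry alone can finish; ellipticity (through the Oka property) must enter a second time. The missing ingredient is the theorem of Ohsawa--Sibony ($\mathcal C^2$ boundary) and Harrington (Lipschitz boundary), based on Takeuchi's theorem. It says a pseudoconvex domain $\Omega\subsetneq\Pj^n$ with Lipschitz boundary has a \emph{bounded} plurisubharmonic exhaustion $\varphi<0$, which is nonconstant because $\Omega$ is not compact. This contradicts the fact that Oka manifolds carry no nonconstant plurisubharmonic function bounded above. Your Liouville instinct was right, but it needs boundedness, which is where the Lipschitz hypothesis is used a second time.

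\textbf{Part (2).} Your outline is sound: Proposition~\ref{proposition:hull}, plus the fact that no open piece of $\mathbb R^n$ lies in a hypersurface. However, the Hartogs figure you flag as the main obstacle is never produced, and the proposed one cannot work.

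\begin{itemize}
\item For $\Phi(x)=x+ia$, put $c=\operatorname{Re}a$. Then $\Phi(x)\in\mathbb R^n$ if and only if $\operatorname{Im}x=-c$.
\item The hull meets $\mathbb R^n$ only if $|c_j|<1$ for all $j$.
\item In that case, $x=(-ic_1,\dots,-ic_{n-1},t-ic_n)$ with $t\in\mathbb R$ and $r^2<t^2+c_n^2<1$ lies in the thin part $\D^{n-1}\times\{r<|z_n|<1\}$ and is mapped into $\mathbb R^n$. So no shift works.
\end{itemize}

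Real codimension is also not the relevant reason: $\{z_1=0\}\subset\C^2$ has real codimension two but Stein complement.

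You also misread Proposition~\ref{proposition:local}. Its hypothesis is not $\Omega=\operatorname{int}\overline\Omega$ but the weaker condition that $N\setminus\Omega$ is a hypersurface of $N$ whenever it is contained in one. For $\C^n\setminus\mathbb R^n$ this is exactly your uniqueness observation, so the proposition applies directly. That is the paper's route. It then only needs non-pseudoconvexity, which follows from $\partial^2(-\log|\operatorname{Im}z|)/\partial z_2\partial\overline z_2=-1/4$ at $(i,0,\dots,0)$.

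If you keep your route, you can obtain the figure from this computation via the Hartogs characterization. Alternatively, take a nonlinear figure. For $n=2$ and $b\leq\tfrac12$, let $\Phi(z,w)=\bigl(i(\tfrac{a^2}{2}(1-2w)+z^2),z\bigr)$.
\begin{itemize}
\item If $(z,w)\in\D^2$ and $\Phi(z,w)\in\mathbb R^2$, then $z=x\in\mathbb R$ with $x^2=-\tfrac{a^2}{2}(1-2\operatorname{Re}w)$.
\item This is impossible when $|w|<b$, and it forces $|x|<a$ in general, so $\Phi(H)\cap\mathbb R^2=\emptyset$.
\item On the other hand, $\Phi(a/2,3/4)=(0,a/2)$ lies in $\mathbb R^2$.
\end{itemize}
For larger $n$, use $w_2,\dots,w_{n-1}$ as the extra coordinates. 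Then rescale by real dilations and real translations into the neighborhood provided by Theorem~\ref{theorem:main}.
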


For $n\geq3$, the nonellipticity in (1) and (2) also follows from results of Andrist, Shcherbina and Wold \cite[Theorem~1.1 and Corollary~1.4]{Andrist2016AM}; their argument extends holomorphic vector bundles across compact sets and closed totally real Lipschitz sets, which requires $n\geq3$.
For (2), recall that $\C^n\setminus\mathbb R^n$ is Oka by \cite[Corollary~1.7]{Kusakabe2024AoM} if $n\geq4$, by \cite[Theorem~1.3]{Du2026} if $n=3$, and by \cite[Theorem~1.2(i)]{Du-Guo-Wang-Xie2026} if $n=2$.

We outline the proof of Theorem~\ref{theorem:main}.
Fix a coordinate ball $U$.
Over small open subsets of $\Omega\cap U$, a holomorphic right inverse $\sigma$ of $ds|_E$ gives maps
\[
	\xi\mapsto s(\sigma_y\xi)=y+\xi+\sum_{|\alpha|\geq2}A_\alpha(y)\xi^\alpha,
\]
and the growth of their Taylor coefficients yields a plurisubharmonic function $u$ with $\dist(\cdot,U\setminus\Omega)\gtrsim e^{-u}$ (Lemma~\ref{lemma:covering}).
Since such right inverses are neither unique nor globally defined, we select finitely many of these maps by suitable affine conditions on their Taylor coefficients, independently of the choice of $\sigma$ (Section~\ref{section:canonical}); this is inspired by the use of Taylor coefficients in \cite[Examples~2.9 and~2.10]{Kusakabe-Matsumura}.
The symmetric functions of their coefficients are meromorphic on $\Omega\cap U$, and the maximum of the resulting functions $u$ extends plurisubharmonically outside their polar sets (Section~\ref{section:barrier}).
The corollaries are then deduced from Theorem~\ref{theorem:main} by Levi's extension theorem (Sections~\ref{section:main} and~\ref{section:applications}).

\section{Preliminaries}
\label{section:preliminaries}

Throughout, complex manifolds are paracompact and $n=\dim Y$.
A \emph{coordinate ball} in $Y$ is an open set $U\subset Y$ together with a biholomorphic map onto the unit ball $\B^n\subset\C^n$, by which we identify $U$ with $\B^n$; balls in $U$ and distances in $U$ are taken in these coordinates.
We use the convention $\dist(y,\emptyset)=+\infty$.
A \emph{hypersurface} of a complex manifold $N$ is a closed analytic subset of $N$ of pure codimension one; it may be empty.
Recall that an open set $G\subset\C^n$ is pseudoconvex if and only if $-\log\dist(\cdot,\C^n\setminus G)$ is plurisubharmonic on $G$, and if and only if $G$ admits a continuous plurisubharmonic exhaustion function \cite[Theorem~II.3.4 and Lemma~II.2.10]{Fritzsche2002}; pseudoconvex open subsets of $\C^n$ are Stein by Oka's solution of the Levi problem \cite{Oka1953JJM}.
Since every open subset of $\C$ is pseudoconvex, we may and do assume $n\geq2$ whenever pseudoconvexity is at stake.

Until the end of Section~\ref{section:barrier}, $\Omega\subset Y$ is a domain with a dominating spray $(E,p,s)$.
The differentials $ds_{0_y}|_{E_y}:E_y\to T_{Y,y}$ form a surjective morphism of holomorphic vector bundles $ds|_E:E\to T_\Omega$.
Its kernel $\mathcal K$ is a holomorphic subbundle of $E$ of rank $k$.

\subsection{A criterion for pseudoconvexity}
Let $U$ be a coordinate ball, and put $\delta_U(y)=1-|y|$ for $y\in U$.
For a function $u$ on an open set $V\subset\Omega\cap U$, we consider the inequality
\begin{equation}
	\label{equation:barrier}
	\dist(y,U\setminus\Omega)\geq c_ne^{-u(y)}\qquad(y\in V),
\end{equation}
where $c_n>0$ is the constant of Lemma~\ref{lemma:covering} below, which depends only on $n$.

\begin{lemma}
	\label{lemma:barrier}
	Let $P$ be a hypersurface of $\Omega\cap U$, and let $u$ be a plurisubharmonic function on $(\Omega\cap U)\setminus P$ satisfying \eqref{equation:barrier}.
	Then $(\Omega\cap U)\setminus P$ is pseudoconvex.
\end{lemma}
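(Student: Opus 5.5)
Set $G=(\Omega\cap U)\setminus P$, an open subset of $U\cong\B^n\subset\C^n$. We show that $G$ has a continuous plurisubharmonic exhaustion, or at least that $-\log\dist(\cdot,\C^n\setminus G)$ is plurisubharmonic near each boundary point; either criterion recalled in the preliminaries then gives pseudoconvexity. Pseudoconvexity is local at the boundary (Oka's lemma), so it suffices to show that each $a\in\partial G$ has a neighborhood $W$ with $G\cap W$ pseudoconvex. The boundary points of $G$ in $\C^n$ fall into three types:
\begin{enumerate}
\item $a\in\partial U$;
\item $a\in U\cap\partial\Omega$, that is, $a\in U\setminus\Omega$;
\item $a\in P$, where $P\subset\Omega\cap U$.
\end{enumerate}

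Near points of type (3), take a small ball $W\subset\Omega\cap U$ around $a$. There $P\cap W$ is a hypersurface of $W$, given by $f=0$ after shrinking $W$ so that $P$ is principal (for example by taking a defining function of its pure one-codimensional germ). Then $G\cap W=W\setminus\{f=0\}$, which is pseudoconvex, since $-\log\dist(\cdot,\partial W)-\log|f|$ is a plurisubharmonic exhaustion.

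Near points of type (1), intersecting with the ball $U$ causes no trouble: the intersection of pseudoconvex sets is pseudoconvex, and $U$ itself is pseudoconvex. So the real work concerns points of type (2).

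The main step handles points of type (2). Define
\[
\varphi=\max\bigl(u,\,-\log\delta_U\bigr)
\]
on $G$. This function is plurisubharmonic, since $-\log\delta_U=-\log(1-|y|)$ is plurisubharmonic on the ball. We aim to show that $\varphi(y)\to\infty$ as $y\to a$ for $a\in U\setminus\Omega$. This follows from \eqref{equation:barrier}: since $\dist(y,U\setminus\Omega)\le|y-a|\to0$, we get $e^{-u(y)}\le c_n^{-1}|y-a|\to0$, so $u\to+\infty$.

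Near such $a$, points of $P$ may also accumulate, and $u$ is not defined on $P$. To deal with this, fix a small ball $W$ around $a$ and use the pseudoconvexity criterion for increasing unions. Let
\[
G_j=\{y\in G\cap W:\ \varphi(y)<j,\ \dist(y,\partial W)>1/j\}.
\]
Each $G_j$ is a sublevel set of a plurisubharmonic function, so it is pseudoconvex provided we also control the boundary behavior near $P$.

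The cleanest approach is to pass through the standard characterization: an open $D\subset\C^n$ is pseudoconvex if and only if it satisfies the Kontinuitätssatz, meaning that for any continuous family of closed analytic discs in $D$ whose boundaries stay in a compact subset of $D$, the discs themselves stay in a compact subset. Suppose a family of discs $\Delta_t\subset G\cap W$, $t<1$, has boundaries in a compact $K\subset G$ while $\Delta_1\not\subset G$. By the maximum principle for $\varphi$ on each $\Delta_t$, $\varphi\le\max_K\varphi<\infty$ on $\Delta_t$. The barrier inequality then gives $\dist(\Delta_t,U\setminus\Omega)\ge c_ne^{-\max_K\varphi}>0$ uniformly, and similarly $\delta_U$ is bounded below. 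Hence $\Delta_1\subset\Omega\cap U$, and the escaping points must lie in $P$. Within the pseudoconvex open set $\Omega'$, a small ball neighborhood of the compact limit in $\Omega\cap U$, the type (3) argument shows that $\Omega'\setminus P$ is locally pseudoconvex near $P$, hence satisfies the Kontinuitätssatz there. Thus the discs cannot approach $P$ either, a contradiction.

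The hardest part is this gluing of the two kinds of boundary, the poles of $u$ along $P$ and the barrier at $\partial\Omega$. The disc argument, using the uniform lower bound on the distance supplied by \eqref{equation:barrier}, is what handles it.
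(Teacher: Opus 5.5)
Your reduction is sound up to the maximum-principle step. With $\varphi=\max\{u,-\log\delta_U\}$ and $M=\max_K\varphi$, inequality \eqref{equation:barrier} puts every $\Delta_t$, $t<1$, in the compact set $\{\delta_U\geq e^{-M},\ \dist(\cdot,U\setminus\Omega)\geq c_ne^{-M}\}\subset\Omega\cap U$. Hence $\Delta_1\subset\Omega\cap U$, and it can leave $G$ only through $P$. This step already covers points of $\partial U$. Your separate treatment of type (1) by ``intersecting pseudoconvex sets'' does not work as stated: near $\partial U$ nothing except $U$ is known to be pseudoconvex, and both $U\setminus\Omega$ and $P$ may accumulate there.

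The genuine gap is the last step. You place the limit inside a ``pseudoconvex open set $\Omega'$, a small ball neighborhood of the compact limit in $\Omega\cap U$'', but nothing provides such a set. The discs need not be small, and they may surround points of $U\setminus\Omega$ (for instance points on both sides of a flat disc) while keeping distance $\geq c_ne^{-M}$ from them. So no ball containing them need lie in $\Omega\cap U$, and a union of small balls need not be pseudoconvex. Without a pseudoconvex ambient set, the pseudoconvexity of $B\setminus P$ for small balls $B$ is only local information at points of $P$, whereas the Kontinuit\"atssatz concerns whole discs. Passing from one to the other is exactly the gluing you call the hardest part, and it is asserted rather than proved.

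There are two ways to close it. The first is the paper's proof, which localizes at the right place. Put $G_t=\{y\in G:\varphi(y)<t\}$. By \eqref{equation:barrier}, $\overline{G_t}$ is a compact subset of $\Omega\cap U$, so every point of $\overline{G_t}$ has a ball $B\subset\Omega\cap U$. On $B$, $G_t\cap B=\{x\in B\setminus P:\varphi(x)<t\}$ is a sublevel set of a plurisubharmonic function on the pseudoconvex set $B\setminus P$, hence pseudoconvex. The local character of pseudoconvexity then makes $G_t$ pseudoconvex, and $G=\bigcup_jG_j$ is an increasing union. This is the route through your sets $G_j$ that you abandoned: the ``control near $P$'' is automatic once you localize at points of $\overline{G_j}$ instead of at points of $\partial G$, and no case distinction is needed.

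Alternatively, you can stay within your disc argument and use Hurwitz's theorem.
\begin{enumerate}
\item Write $\Delta_t=f_t(\overline\D)$ with $f_t\to f_1$ uniformly. Then $f_1^{-1}(P)$ is a compact subset of $\D$, since $P$ is closed in $\Omega\cap U$ and $f_1(\partial\D)\subset K\subset G$.
\item Locally, $f_1^{-1}(P)$ is the zero set of $h\circ f_1$ for local defining functions $h$ of $P$.
\item Its set of interior points is open and closed in $\D$ by the identity theorem. If it were nonempty it would equal $\D$, giving $f_1(\partial\D)\subset P$, which is impossible.
\item Hence the points of $f_1^{-1}(P)$ are isolated zeros of $h\circ f_1$. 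Hurwitz's theorem then gives zeros of $h\circ f_t$, that is, points of $\Delta_t\cap P$, for $t$ close to $1$, a contradiction.
\end{enumerate}
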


\begin{proof}
	Put $G=(\Omega\cap U)\setminus P$ and $G_t=\{y\in G:\max\{u(y),-\log\delta_U(y)\}<t\}$ for $t\in\mathbb R$.
	By \eqref{equation:barrier}, $\overline{G_t}$ is a compact subset of $\Omega\cap U$.
	Every point of $\overline{G_t}$ has a ball neighborhood $B\subset\Omega\cap U$, and $G_t\cap B=\{x\in B\setminus P:\varphi(x)<t\}$, where $\varphi=\max\{u,-\log\delta_U\}$ is plurisubharmonic on $B\setminus P$.
	The open set $B\setminus P$ is pseudoconvex, and so is $G_t\cap B$: if $\Phi$ is a Hartogs figure with $\Phi(H)\subset G_t\cap B$ (see Section~\ref{section:main}), then $\Phi(\D^n)\subset B\setminus P$, and for $(z_0,w_0)\in\D^n$ and $\max\{a,|z_0|\}<r<1$, the subharmonic function $z\mapsto\varphi(\Phi(z,w_0))$ is less than $t$ on the circle $|z|=r$, hence at $z_0$.
	Since pseudoconvexity is a local property of the boundary \cite[Theorem~II.3.7]{Fritzsche2002}, $G_t$ is pseudoconvex.
	Since $G=\bigcup_{j\geq1}G_j$ is an increasing union of pseudoconvex open sets, it is pseudoconvex \cite[Theorem~II.3.6]{Fritzsche2002}.
\end{proof}

\subsection{Taylor coefficients and the boundary distance}
A \emph{linear splitting at $y\in\Omega\cap U$} is a linear map $\sigma:\C^n\to E_y$ with $ds_{0_y}\circ\sigma=\id$, where $T_{Y,y}$ is identified with $\C^n$ by the coordinates of $U$.
For a holomorphic family $\sigma=(\sigma_y)_{y\in V}$ of linear splittings over an open set $V\subset\Omega\cap U$, write
\[
	s(\sigma_y\xi)=y+\xi+\sum_{|\alpha|\geq2}A_\alpha(y)\xi^\alpha
\]
near $\xi=0$ in the coordinates of $U$, where $\alpha$ runs over multi-indices and $A_\alpha(y)\in\C^n$, and define
\[
	v_\sigma=\biggl(\sup_{|\alpha|\geq2,\ 1\leq l\leq n}\frac{\log^+|(A_\alpha)_l|}{|\alpha|-1}\biggr)^*,
	\qquad
	u_\sigma=\max\{v_\sigma,-\log\delta_U\},
\]
where $\log^+t=\max\{0,\log t\}$ with $\log^+0=0$, and ${}^*$ denotes the upper semicontinuous regularization.

\begin{lemma}
	\label{lemma:covering}
	The functions $v_\sigma$ and $u_\sigma$ are plurisubharmonic on $V$, and there is a constant $c_n>0$ depending only on $n$ such that
	\[
		\dist(y,U\setminus\Omega)\geq c_ne^{-u_\sigma(y)}\qquad(y\in V).
	\]
\end{lemma}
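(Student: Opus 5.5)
Plurisubharmonicity comes first. The coefficients $A_\alpha(y)$ are holomorphic in $y\in V$, because $\sigma$ is a holomorphic family and $s$ is holomorphic, so the Taylor coefficients in $\xi$ of $s(\sigma_y\xi)$ depend holomorphically on $y$ (Cauchy integral formula in $\xi$ on a small polydisc, uniformly for $y$ near a given point). Hence each $\log^+|(A_\alpha)_l|/(|\alpha|-1)=\max\{0,\log|(A_\alpha)_l|\}/(|\alpha|-1)$ is plurisubharmonic. The supremum of this family is locally bounded above on $V$: by Cauchy estimates on a polydisc of radius $r$ on which $|s(\sigma_y\xi)-y|\le M$, we get $|(A_\alpha)_l|\le M r^{-|\alpha|}$, so
\[
\frac{\log^+|(A_\alpha)_l|}{|\alpha|-1}\le \frac{\log^+M+|\alpha|\log^+(1/r)}{|\alpha|-1}\le \log^+M+2\log^+(1/r)
\]
for $|\alpha|\ge2$. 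The upper semicontinuous regularization of a locally bounded-above supremum of plurisubharmonic functions is plurisubharmonic, so $v_\sigma$ is plurisubharmonic. Since $-\log\delta_U=-\log(1-|y|)$ is plurisubharmonic on the ball, so is $u_\sigma$.

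For the distance estimate, fix $y\in V$ and set $R=e^{-u_\sigma(y)}\le \delta_U(y)$. Let $f(\xi)=s(\sigma_y\xi)-y=\xi+\sum A_\alpha\xi^\alpha$. This is a map defined on all of $\C^n$ with values in $\Omega\subset Y$; the expansion holds only while the image stays in $U$. Put $g(\zeta)=R^{-1}f(R\zeta)=\zeta+\sum R^{|\alpha|-1}A_\alpha\zeta^\alpha$. Because $R\le e^{-v_\sigma(y)}$, we have $R^{|\alpha|-1}|(A_\alpha)_l|\le 1$ for all $\alpha,l$. So each component of $g-\id$ has coefficients bounded by $1$ and no terms of degree below two, and $g$ is a uniformly controlled normalized map near $0$. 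On the polydisc $|\zeta_j|\le 1/4$, say, the power series converges, and $|g(\zeta)-\zeta|\le C_n|\zeta|^2$ with a dimensional constant $C_n$.

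Choose a small dimensional radius $\rho_n$. A degree/Rouché argument then shows that $g$ maps the ball of radius $\rho_n$ onto a set containing the ball of radius $\rho_n/2$: for $|w|<\rho_n/2$ the homotopy between $\zeta-w$ and $g(\zeta)-w$ has no zeros on the sphere $|\zeta|=\rho_n$. Hence $f$ maps $\{|\xi|<\rho_nR\}$ onto a set containing $\{|x|<\rho_nR/2\}$. Since $R\le\delta_U(y)$ and $|f|\le C'R$ there, this part of the image stays inside $U$ provided the constants are small, so the expansion is legitimate. The image lies in $\Omega$, so $B(y,\rho_nR/2)\subset\Omega\cap U$, and we take $c_n=\rho_n/2$.

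The main obstacle is the legitimacy of the power series on the region used. A priori the series only converges near $0$, and $s(\sigma_y\xi)$ might leave $U$, so the coordinates become meaningless. I would handle this by a continuity argument. The coefficient bound shows that the series defines a holomorphic map $h$ on the polydisc $\{|\xi_j|<R/4\}$ with values in $\{|x-y|<C'R\}\subset U$. By the identity theorem, $h$ agrees with $s(\sigma_y\xi)$ wherever the latter stays in $U$. The set where $s(\sigma_y\xi)\in U$ and the two agree is open and closed in this connected polydisc, because $h$ stays compactly inside $U$. So they agree on the whole polydisc, and then the Rouché step applies.
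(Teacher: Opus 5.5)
Your proposal is correct and follows essentially the paper's route. The same Cauchy estimates give plurisubharmonicity, and the same rescaling at radius comparable to $e^{-u_\sigma(y)}=\min\{e^{-v_\sigma(y)},\delta_U(y)\}$ makes $s(\sigma_y\cdot)$ a small perturbation of $y+\xi$ whose image contains a ball; the only difference is that the paper bounds $\|dR_y\|\le 1/4$ and uses a contraction mapping, where you use a degree/Rouch\'e argument from a $C^0$ bound.

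One small fix: run your identity-theorem continuation on the ball $\{|\xi|<\rho_nR\}$ rather than on the polydisc $\{|\xi_j|<R/4\}$. On the polydisc your bounds do not give $|h-y|<\delta_U(y)$ when $R=\delta_U(y)$, since the linear term alone reaches $\sqrt n\,R/4$.
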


\begin{proof}
	Locally in $y$, the map $(y,\xi)\mapsto s(\sigma_y\xi)$ takes values in $U$ for $\xi$ in a fixed small polydisc, so Cauchy's estimates give $|(A_\alpha(y))_l|\leq C\varrho^{-|\alpha|}$ with $C,\varrho>0$ independent of $y$ and $\alpha$.
	Hence the plurisubharmonic functions in the supremum are locally uniformly bounded above, and $v_\sigma$ is plurisubharmonic and finite.
	So is $u_\sigma$, since $-\log\delta_U$ is plurisubharmonic on the ball.

	Fix $y\in V$ and put $M=e^{v_\sigma(y)}\geq1$, so that $|(A_\alpha(y))_l|\leq M^{|\alpha|-1}$.
	Choose $\epsilon_n\in(0,1/4)$ such that
	\[
		n\sum_{m\geq2}m\binom{n+m-1}{m}\epsilon_n^{m-1}\leq\frac14,
	\]
	and put $r=\epsilon_n\min\{M^{-1},\delta_U(y)\}=\epsilon_ne^{-u_\sigma(y)}$.
	Since there are $\binom{n+m-1}{m}$ multi-indices of length $m$ and $Mr\leq\epsilon_n$, the series
	\[
		R_y(\xi)=\sum_{|\alpha|\geq2}A_\alpha(y)\xi^\alpha
	\]
	defines a holomorphic map near $r\overline{\B^n}$ with $R_y(0)=0$ and $\|dR_y\|\leq1/4$ on $r\overline{\B^n}$, where $\|\cdot\|$ is the operator norm.
	Put $F(\xi)=y+\xi+R_y(\xi)$.
	In particular, $|F(\xi)-y|\leq5r/4<\delta_U(y)$ there, so $F$ takes values in $U$ and agrees with $s(\sigma_y\cdot)$ on $r\overline{\B^n}$ by the identity theorem.
	For $|\eta|<r/2$, the map $\xi\mapsto\eta-R_y(\xi)$ is a contraction of $r\overline{\B^n}$ into itself, and its fixed point $\xi$ satisfies $y+\eta=F(\xi)=s(\sigma_y\xi)\in\Omega$.
	Hence $y+(r/2)\B^n\subset\Omega\cap U$, which proves the inequality with $c_n=\epsilon_n/2$.
\end{proof}

\section{Canonical restricted sprays}
\label{section:canonical}

Let $U$ be a coordinate ball and $D$ a connected component of $\Omega\cap U$.
For $y\in D$, the set $\Spl_y$ of linear splittings at $y$ is an affine space over $\Hom(\C^n,\mathcal K_y)\cong\C^{nk}$, and these spaces form a holomorphic affine bundle $\Spl\to D$ with connected total space.
For $\sigma\in\Spl_y$, the \emph{restricted spray} $S_\sigma:\C^n\to\Omega$, $S_\sigma(\xi)=s(\sigma\xi)$, satisfies $S_\sigma(0)=y$ and $dS_\sigma(0)=\id$.
Near $\xi=0$ we expand it in the coordinates of $U$ as
\[
	S_\sigma(\xi)=y+\xi+\sum_{|\alpha|\geq2}A_\alpha(\sigma)\xi^\alpha,\qquad A_\alpha(\sigma)\in\C^n.
\]
The point is that these coefficients take values in a fixed vector space and do not involve the bundle $E$.
By the identity theorem, two restricted sprays based at the same point coincide if and only if all their Taylor coefficients agree.

\subsection{Local description}
On a sufficiently small ball $D_0\subset D$, choose a holomorphic splitting $\sigma^0$ of $ds|_E$ and a holomorphic frame $\kappa_1,\dots,\kappa_k$ of $\mathcal K$.
For $\tau=(\tau_{jl})\in\C^\nu$, $\nu=nk$, put
\[
	\sigma_\tau(\xi)=\sigma^0(\xi)+\sum_{l=1}^k\Bigl(\sum_{j=1}^n\tau_{jl}\xi_j\Bigr)\kappa_l,
\]
which identifies $\Spl_y$ with $\C^\nu$ for every $y\in D_0$.
In a frame of $E$ over $D_0$, the spray has an expansion $s(v)=y+\sum_{j\geq1}\eta_j(y)(v)$ near the zero section, where $\eta_j(y)$ is a $\C^n$-valued homogeneous polynomial of degree $j$ on $E_y$ whose coefficients are holomorphic in $y$ (cf.\ \cite[Examples~2.9 and~2.10]{Kusakabe-Matsumura}).
Substituting $v=\sigma_\tau(\xi)$, which is linear in $\xi$ and affine in $\tau$, we see that
\[
	A_\alpha(y,\tau):=A_\alpha(\sigma_\tau)
\]
is a polynomial in $\tau$ of degree at most $|\alpha|$ whose coefficients are holomorphic functions of $y\in D_0$.
Another choice of $\sigma^0$ and $\kappa$ changes $\tau$ by an affine automorphism of $\C^\nu$ depending holomorphically on $y$, but does not change the functions $A_\alpha$ on $\Spl_y$.
For $J\geq2$ let $V_J=(\C^n)^{\{\alpha:2\leq|\alpha|\leq J\}}$ and
\[
	\Theta_J:\Spl\to V_J,\qquad
	\Theta_J(\sigma)=\bigl(A_\alpha(\sigma)\bigr)_{2\leq|\alpha|\leq J}.
\]
In local coordinates we write $\Theta_J(y,\tau)=\Theta_J(\sigma_\tau)$.

\subsection{Generic algebraic fibers}
We use Chevalley's theorem \cite[Exercises~II.3.18 and~II.3.19]{Hartshorne1977}, generic smoothness \cite[Corollary~III.10.7]{Hartshorne1977}, and generic topological local triviality of algebraic families \cite[Corollaire~5.1]{Verdier1976IM}.

\begin{lemma}[Finite selection of restricted sprays]
	\label{lemma:canonical}
	There are integers $J\geq2$, $r\geq0$ and $c\geq1$, a coordinate projection $\Lambda:V_J\to\C^r$, a point $\zeta\in\C^r$, and a closed set $\Sigma\subset D$ such that every point of $D$ has a connected neighborhood $W$ with $\Sigma\cap W$ contained in a nowhere dense analytic subset of $W$.
	For $y\in D$ put
	\[
		\Gamma_y=\{\sigma\in\Spl_y:\Lambda((A_\alpha(\sigma))_{2\leq|\alpha|\leq J})=\zeta\}.
	\]
	For $y\in D\setminus\Sigma$, these fibers have the following properties:
	\begin{enumerate}
		\item $\Gamma_y$ is a smooth algebraic set with $c$ connected components, and every $A_\alpha$ is constant on each component;
		\item near $y$ there are holomorphic families of splittings $\sigma_1,\dots,\sigma_c$ whose values at each point $y'$ lie in pairwise distinct components of $\Gamma_{y'}$;
		\item for every $\alpha$, $1\leq l\leq n$ and $1\leq j\leq c$, the function
		\[
			e_j\bigl((A_\alpha(\sigma_1))_l,\dots,(A_\alpha(\sigma_c))_l\bigr),
		\]
		where $e_j$ is the $j$-th elementary symmetric polynomial, does not depend on the choice of $\sigma_1,\dots,\sigma_c$, is holomorphic on $D\setminus\Sigma$, and extends meromorphically to $D$.
	\end{enumerate}
\end{lemma}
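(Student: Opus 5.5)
We plan to select the affine conditions so that the fiber $\Gamma_y$ cuts each ``level set'' of the full coefficient map down to finitely many pieces on which all coefficients are constant. The setting is the coefficient maps $\Theta_J:\Spl\to V_J$, which are fiberwise polynomial of degree $\le J$ in $\tau$ with holomorphic coefficients in $y$. First we stabilize in $J$. For each $y$, consider the equivalence relation on $\Spl_y\cong\C^\nu$ given by ``$\sigma\sim\sigma'$ iff all $A_\alpha$ agree,'' i.e.\ iff $S_\sigma=S_{\sigma'}$. Its classes are the intersections over $J$ of the fibers of $\Theta_J|_{\Spl_y}$. These fibers are algebraic sets, and the descending chain of the corresponding fiber-product subvarieties $\{(\sigma,\sigma'):\Theta_J(\sigma)=\Theta_J(\sigma')\}\subset\C^{2\nu}$ stabilizes by Noetherianity. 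To make this uniform in $y$, we work over a generic point, that is, over the function field of a small ball $D_0$, or equivalently by a semicontinuity argument. This gives a $J$ such that, off a nowhere dense analytic set, the fibers of $\Theta_J|_{\Spl_y}$ are exactly the $\sim$-classes, and hence every $A_\alpha$ is constant on each fiber of $\Theta_J$.

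Next we choose $\Lambda$ and $\zeta$. Let $d$ be the generic dimension of the image $\Theta_J(\Spl_y)$; by Chevalley its closure is an irreducible affine variety of dimension $d$. We pick a coordinate projection $\Lambda:V_J\to\C^r$ with $r=d$ such that $\Lambda$ restricted to the closure of the image is generically finite, which holds for a generic choice of $d$ coordinates after a Noether normalization by coordinate projections. Then, for generic $\zeta$, the set $\Lambda^{-1}(\zeta)\cap\overline{\Theta_J(\Spl_y)}$ is finite, with $c$ points lying in the image. Hence $\Gamma_y$ is the union of $c$ fibers of $\Theta_J$, on each of which all $A_\alpha$ are constant. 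By generic smoothness these fibers are smooth for generic $\zeta$. To claim that each fiber is connected, I would replace the fiber decomposition by its Stein factorization; alternatively, I would count connected components directly and note that every $A_\alpha$ is still constant on each of them. This gives (1), with $c$ the number of components.

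The exceptional set $\Sigma$ collects the points where the generic behaviour fails. The coefficients of the polynomial system depend holomorphically on $y$ in the local charts $D_0$, so the bad set is locally contained in a proper analytic subset. It is defined by the vanishing of resultants and discriminants, and by the loci where the dimension or the number of components jumps; for the last of these we use Verdier's generic topological local triviality, which makes the number of connected components locally constant off a nowhere dense analytic set. Because $D$ is connected, a single $\zeta$ chosen on one $D_0$ stays generic elsewhere: the failure locus of any analytic condition that holds at one point is nowhere dense. Moreover the conditions are intrinsic to $\Spl_y$, so they do not depend on the chart.

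For (2), at $y\notin\Sigma$ the set $\Gamma$ is a smooth submanifold of the total space that is submersive over $D\setminus\Sigma$. This follows from the implicit function theorem, using the smoothness and constant dimension together with local triviality. So we can take local holomorphic sections through each of the $c$ components, giving $\sigma_1,\dots,\sigma_c$.

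For (3), each $A_\alpha(\sigma_j)$ depends only on the component containing $\sigma_j$. The elementary symmetric functions are therefore independent of both the choice of sections and their ordering, and they are holomorphic on $D\setminus\Sigma$. Meromorphic extension across $\Sigma$ is the main obstacle. I would prove it by exhibiting $(A_\alpha)_l$ as a root of a polynomial whose coefficients are holomorphic on $D_0$. Eliminating $\tau$ from $\Lambda\Theta_J(y,\tau)=\zeta$ together with $t=(A_\alpha(y,\tau))_l$, for instance via a resultant or a Gröbner-basis elimination with coefficients holomorphic in $y$, yields a polynomial $Q(y,t)$ that vanishes at the $c$ values. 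Dividing by the leading coefficient then makes the $e_j$ ratios of holomorphic functions, possibly after removing multiplicities, which are handled by generic squarefreeness. Alternatively, the $e_j$ are locally bounded by a power of $1/h$ for a holomorphic $h$ defining $\Sigma$, and a Riemann-type extension theorem then shows that $h^Ne_j$ extends holomorphically.
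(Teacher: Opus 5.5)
\textbf{Main gap: the uniform stabilization in $J$.} Your overall architecture matches the paper: stabilize in $J$, take a coordinate projection $\Lambda$ that is generically finite on the image and a generic $\zeta$, control the exceptional set by Chevalley and Verdier, get sections from the implicit function theorem, and extend via an annihilating polynomial. But the step that carries (1) is not established. You stabilize the relation $R_J=\{\Theta_J(\sigma)=\Theta_J(\sigma')\}$ by Noetherianity over the field $K$ of meromorphic functions on $D_0$. This gives $J_0$ with $A_\alpha(\tau)-A_\alpha(\tau')\in\sqrt{I_{J_0}}$ in $K[\tau,\tau']$ for every $\alpha$. However, each such membership has its own denominators, so it specializes to a given $y$ only off a hypersurface $H_\alpha\subset D_0$. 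Since there are infinitely many $\alpha$, the argument only controls $y$ off $\bigcup_\alpha H_\alpha$, which need not lie in any nowhere dense analytic set (it can be dense). Pointwise Noetherianity gives an index $J(y)$ with no a priori local bound, and the unspecified ``semicontinuity argument'' does not supply one. So the claim that, off a nowhere dense analytic set, the fibers of $\Theta_J|_{\Spl_y}$ are exactly the $\sim$-classes does not follow, and your constancy of the $A_\alpha$ on $\Gamma_y$ falls with it. The claim can be repaired: by generic flatness, $R_{J_0}$ is flat, hence open, over the base off a nowhere dense analytic set, so the closed subset $R_J\subseteq R_{J_0}$, which agrees with it off $H_J$, agrees with it there. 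That idea is absent from your proposal. The paper sidesteps the issue by stabilizing the maximal vertical rank $r$ of $\Theta_{J'}$ on all of $\Spl$, which is a bounded integer. The resulting inequality $\rank d(F_y,A)\le r$ holds at every point and for every scalar coefficient $A$ at once. Hence wherever $F_y=\Lambda\circ\Theta_J|_{\Spl_y}$ has rank $r$ on $\Gamma_y$, one gets $dA|_{T\Gamma_y}=0$, i.e.\ constancy on connected components, which is all (1) needs; no identification of fibers with $\sim$-classes is required.

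\textbf{Smaller points.} Your first route to (3) fails. An eliminant $Q(y,t)$ (e.g.\ a resultant) may have roots other than the values $\mu_i=(A_\alpha(\sigma_i))_l$. Even a squarefree one records only the set of values, not how many components share a value, so ratios of its coefficients are not $e_j(\mu_1,\dots,\mu_c)$; removing multiplicities discards exactly the information $e_j$ needs. Your second route is the paper's, but the bound must come from an annihilating polynomial $q_mt^m+\dots+q_0$ with $q_m\not\equiv0$, not from some $h$ ``defining $\Sigma$''. Cauchy's root bound gives $|q_m\mu_i|\le|q_m|+\max_h|q_h|$, so $q_m^je_j$ is locally bounded near $\Sigma$ and extends by Riemann's theorem. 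Finally, choosing a single $\zeta$ valid on all of $D$ is not an identity-theorem statement, since conditions such as $\Gamma_y\neq\emptyset$ are not analytic in $y$. The paper needs constructibility in the universal family on each ball, together with an open--closed argument over the connected set $D$. Likewise, that the sections in (2) stay in distinct components at nearby $y'$ requires the local trivialization explicitly.
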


\begin{proof}
	\emph{Step~1: Rank.}
	Let $r_J$ be the maximal vertical rank of $\Theta_J$ on $\Spl$.
	These integers are nondecreasing in $J$ and at most $\nu$, so we may fix $J$ such that $r_{J'}=r_J=:r$ for all $J'\geq J$.
	Then every finite family of coefficients $A_\alpha$ has vertical rank at most $r$.
	If $r=0$, every $A_\alpha$ is constant on each $\Spl_y$, and the lemma holds with $c=1$ and $\Sigma=\emptyset$, using local holomorphic splittings.
	We henceforth assume $r>0$.
	The locus where $\Theta_J$ has vertical rank less than $r$ is an intrinsic proper analytic subset of the connected manifold $\Spl$, defined locally by the vanishing of the $r\times r$ minors of its vertical Jacobian.
	It has empty interior, so $\Theta_J$ has vertical rank $r$ somewhere over every nonempty open subset of $D$.

	\emph{Step~2: Choice of $\Lambda$ and $\zeta$.}
	Fix a ball $D_0\subset D$ as above.
	Choose a coordinate projection $\Lambda:V_J\to\C^r$ such that $\Lambda\circ\Theta_J$ has vertical rank $r$ somewhere over $D_0$, and define $F_y=(\Lambda\circ\Theta_J)|_{\Spl_y}$ for every $y\in D$.
	Collecting the coefficients of the polynomial map $\Theta_J(y,\cdot)$, we obtain a holomorphic map $\chi:D_0\to\C^Q$ and a universal polynomial map $\Theta^u$ such that $\Theta_J(y,\tau)=\Theta^u(\chi(y),\tau)$.
	Let $\mathcal Z$ be the Zariski closure of $\chi(D_0)$.
	By the choice of $\Lambda$, the map $\Lambda\Theta^u(z,\cdot)$ is dominant for $z$ in a nonempty Zariski open subset of $\mathcal Z$.
	Thus $\Psi(z,\tau)=(z,\Lambda\Theta^u(z,\tau))$ is dominant.
	Let $\mathcal Z_{\mathrm{reg}}$ be the regular locus of $\mathcal Z$.
	Generic smoothness, applied to the restriction $\Psi:\mathcal Z_{\mathrm{reg}}\times\C^\nu\to\mathcal Z_{\mathrm{reg}}\times\C^r$, gives a nonempty Zariski open subset of $\mathcal Z_{\mathrm{reg}}\times\C^r$ over which $\Psi$ has no critical points; since $\Psi$ preserves the first component, this means that $\Lambda\Theta^u(z,\cdot)$ has rank $r$ at every point of the fiber.
	Dominance and Chevalley's theorem give a nonempty Zariski open subset of $\mathcal Z_{\mathrm{reg}}\times\C^r$ contained in the image of this restriction; intersecting these open sets gives a nonempty Zariski open subset $\mathcal V\subset\mathcal Z_{\mathrm{reg}}\times\C^r$ of pairs with nonempty fibers without critical points.
	The set of all such pairs $(z,\zeta)$ is constructible in $\mathcal Z\times\C^r$, by Chevalley's theorem applied on $\mathcal Z\times\C^\nu$ to $\Psi$ and to the locus where the vertical rank is less than $r$.
	Fix $\zeta$ such that the slice of $\mathcal V$ over $\zeta$ is nonempty; then $\Gamma_y=F_y^{-1}(\zeta)$ for $y\in D$.
	Call $y\in D$ \emph{good} if $\Gamma_y\neq\emptyset$ and $F_y$ has rank $r$ at every point of $\Gamma_y$.
	By the observation above, the points of $D_0$ which are not good lie in a nowhere dense analytic subset of $D_0$.

	\emph{Step~3: The exceptional set.}
	Let $\Gamma=\bigcup_{y\in D}\Gamma_y\subset\Spl$, and let $\Sigma$ be the set of points of $D$ having no open neighborhood $N$ consisting of good points such that $\Gamma|_N\to N$ is topologically locally trivial.
	Here local triviality means that each $y'\in N$ has an open neighborhood $W\subset N$ and a homeomorphism $\Gamma|_W\cong W\times\Gamma_{y'}$ commuting with the projections to $W$.
	Clearly $\Sigma$ is closed, and every point of $D\setminus\Sigma$ is good.
	Let $D_1\subset D$ be a ball as above which contains a nonempty open set of good points, and let $\chi_1$, $\mathcal Z_1$ and $\Theta_1^u$ be the corresponding coefficient map, Zariski closure and universal polynomial map.
	The parameters $z\in\mathcal Z_1$ whose fiber $\{\tau:\Lambda\Theta_1^u(z,\tau)=\zeta\}$ is empty or contains a critical point of $\Lambda\Theta_1^u(z,\cdot)$ form a constructible set.
	This set is not dense in $\mathcal Z_1$: otherwise it would contain a nonempty Zariski open subset of $\mathcal Z_1$, whose inverse image under $\chi_1$ is dense in $D_1$ by the observation above and consists of points which are not good.
	Removing its Zariski closure and applying generic topological local triviality, we obtain a nonempty Zariski open subset $\mathcal Z_1'\subset\mathcal Z_1$ over which the fibers are nonempty without critical points and the family $\{(z,\tau)\in\mathcal Z_1\times\C^\nu:\Lambda\Theta_1^u(z,\tau)=\zeta\}$ is topologically locally trivial.
	Pulling back by $\chi_1$, we see that $\Sigma\cap D_1$ is contained in $\chi_1^{-1}(\mathcal Z_1\setminus\mathcal Z_1')$, which is a nowhere dense analytic subset of $D_1$.
	Let $D^*$ be the set of points of $D$ having a ball neighborhood $B\subset D$ such that $\Sigma\cap B$ is contained in a nowhere dense analytic subset of $B$.
	This set is open and contains $D_0$, by the preceding argument with $D_1=D_0$.
	If $y\in\overline{D^*}\cap D$, choose a small ball $D_1\ni y$ as above; it meets a ball $B$ witnessing membership in $D^*$, so $(D_1\cap B)\setminus\Sigma$ is a nonempty open set of good points.
	The preceding argument applies to $D_1$, hence $y\in D^*$.
	Thus $D^*$ is also closed in $D$, and connectedness gives $D^*=D$.
	In particular, $D\setminus\Sigma$ is connected and dense in $D$, since a closed set locally contained in nowhere dense analytic subsets does not disconnect a connected complex manifold.

	\emph{Step~4: Components.}
	For $y\in D\setminus\Sigma$, the fiber $\Gamma_y$ is a smooth algebraic set of dimension $\nu-r$, so it has finitely many connected components.
	Their number is locally constant on the connected set $D\setminus\Sigma$, hence equal to an integer $c\geq1$.
	For every scalar coefficient $A=(A_\alpha)_l$, the stabilization of the rank gives
	\[
		\rank d(F_y,A)\leq r=\rank dF_y\quad\text{on }\Gamma_y,
		\qquad\text{hence}\qquad
		dA|_{T_{\Gamma_y}}=0.
	\]
	Thus every $A_\alpha$ is constant on each component of $\Gamma_y$.
	By the implicit function theorem, choose for each component of $\Gamma_y$ a local holomorphic section of $\Gamma\to D$ through a point of that component, and denote these sections by $\sigma_1,\dots,\sigma_c$.
	Let $W\subset D\setminus\Sigma$ be a connected neighborhood of $y$ on which they are defined and over which there is a trivialization $h:\Gamma|_W\cong W\times\Gamma_y$.
	For each $i$, the continuous map $y'\mapsto\mathrm{pr}_2(h(\sigma_i(y')))$ takes values in a single component of $\Gamma_y$, and these components are pairwise distinct, since $h$ restricts to a homeomorphism of $\Gamma_y$ and $\sigma_1(y),\dots,\sigma_c(y)$ lie in distinct components.
	As $h$ restricts to homeomorphisms $\Gamma_{y'}\cong\Gamma_y$, it follows that $\sigma_1(y'),\dots,\sigma_c(y')$ lie in pairwise distinct components of $\Gamma_{y'}$ for every $y'\in W$.
	This proves (1) and (2).
	Any two such choices give the same multiset of values of $A_\alpha$ at each point, since each choice meets every component exactly once and $A_\alpha$ is constant on components.
	Thus their elementary symmetric functions agree on overlaps and define holomorphic functions on $D\setminus\Sigma$, independently of the sections.

	\emph{Step~5: Meromorphic extension.}
	Fix a scalar coefficient $A=(A_\alpha)_l$ and a ball $B\subset D$ as in the local description, so small that $\Sigma\cap B$ is contained in a nowhere dense analytic subset $\Sigma'$ of $B$.
	Collect the coefficients of both $\Theta_J(y,\cdot)$ and $A(y,\cdot)$ in a holomorphic map $\chi_B:B\to\C^{Q_B}$, let $\mathcal Z_B$ be the Zariski closure of $\chi_B(B)$, and write $\Theta_J(y,\tau)=\Theta_B^u(\chi_B(y),\tau)$ and $A(y,\tau)=A_B^u(\chi_B(y),\tau)$.
	The image $I$ of the algebraic family $\{(z,\tau)\in\mathcal Z_B\times\C^\nu:\Lambda\Theta_B^u(z,\tau)=\zeta\}$ under $(z,\tau)\mapsto(z,A_B^u(z,\tau))$ is constructible, and its fibers over $\chi_B(B\setminus\Sigma)$ are finite by (1).
	Hence $I$ is not dense in $\mathcal Z_B\times\C$: otherwise it would contain a nonempty Zariski open subset, whose fibers are infinite over a nonempty Zariski open subset of $\mathcal Z_B$, which meets the Zariski dense set $\chi_B(B\setminus\Sigma)$.
	Composing with $\chi_B$ the coefficients of a polynomial which vanishes on $I$ but not on $\mathcal Z_B\times\C$, we obtain holomorphic functions $q_0,\dots,q_m$ on $B$, not all identically zero, such that for every $y\in B\setminus\Sigma$ and every value $\mu$ of $A$ on $\Gamma_y$,
	\[
		q_m(y)\mu^m+\dots+q_1(y)\mu+q_0(y)=0.
	\]
	Discarding identically vanishing leading coefficients, we may assume $q_m\not\equiv0$.
	Since $\Gamma_y\neq\emptyset$ for every $y\in B\setminus\Sigma$, we have $m\geq1$: otherwise $q_0$ would vanish on the dense set $B\setminus\Sigma$.
	For any local choice of sections, put $\mu_i=A(\sigma_i)$, $1\leq i\leq c$.
	Cauchy's bound for roots gives $|q_m\mu_i|\leq |q_m|+\max_{0\leq h<m}|q_h|$, so these products are locally uniformly bounded near $\Sigma\cap B$.
	By Step~4, $q_m^je_j(\mu_1,\dots,\mu_c)=e_j(q_m\mu_1,\dots,q_m\mu_c)$ is a well-defined holomorphic function on $B\setminus\Sigma$.
	It is locally bounded near $\Sigma$, so its restriction to $B\setminus\Sigma'$ extends holomorphically to $B$ by Riemann's extension theorem, and the extension agrees with it on $\Sigma'\setminus\Sigma$ by continuity.
	Dividing by $q_m^j$, we obtain meromorphic extensions of the symmetric functions to $B$, which agree on overlaps by the identity theorem.
	This proves (3).
\end{proof}

\section{Proof of Theorem~\ref{theorem:main}}
\label{section:barrier}

Let $U\subset Y$ be a coordinate ball.
We show that $\Omega$ is meromorphically pseudoconvex in $U$; this proves Theorem~\ref{theorem:main}, since every point of $Y$ has a coordinate ball as a neighborhood.

Let $D$ be a connected component of $\Omega\cap U$, with the data of Lemma~\ref{lemma:canonical}.
On $D\setminus\Sigma$, choose local families $\sigma_1,\dots,\sigma_c$ as in Lemma~\ref{lemma:canonical}\,(2), and put
\[
	v=\max_{1\leq i\leq c}v_{\sigma_i},\qquad u=\max\{v,-\log\delta_U\}.
\]
To see that these functions do not depend on the choice of sections, write
\[
	w_i=\sup_{|\alpha|\geq2,\ 1\leq l\leq n}
	\frac{\log^+|(A_\alpha(\sigma_i))_l|}{|\alpha|-1},
	\qquad \widehat v=\max_{1\leq i\leq c}w_i.
\]
By Lemma~\ref{lemma:canonical}\,(1) and~(2), $\widehat v$ is intrinsic: at each point, any choice gives the same coefficient values up to permutation.
Since a finite maximum commutes with upper semicontinuous regularization, $v=\max_i w_i^*=\widehat v^*$ is intrinsic, and so is $u$.
The functions $v$ and $u$ are finite and plurisubharmonic on $D\setminus\Sigma$, with $v\geq0$.
Moreover, $u$ satisfies \eqref{equation:barrier} by Lemma~\ref{lemma:covering} applied to $\sigma_1$, since $u\geq u_{\sigma_1}$.

Let $P_D$ be the union of the polar sets of the meromorphic functions in Lemma~\ref{lemma:canonical}\,(3).
These polar sets are contained in $\Sigma$.
Near any point of $D$, choose a small connected neighborhood $W$ such that $\Sigma\cap W$ is contained in a nowhere dense analytic subset $\Sigma'_W$ with finitely many irreducible components.
The intersection of each polar set with $W$ is of pure codimension one and is contained in $\Sigma'_W$, so it is a union of codimension-one irreducible components of $\Sigma'_W$.
For each such component occurring in $P_D\cap W$, choose one of the meromorphic functions whose polar set contains it.
Then $P_D\cap W$ is the union of the intersections of these finitely many polar sets with $W$, hence is closed and of pure codimension one.
Thus $P_D$ is a hypersurface of $D$, all these meromorphic functions are holomorphic on $D\setminus P_D$, and locally $P_D$ is the union of the polar sets of finitely many of them.

\begin{lemma}
	\label{lemma:invisible}
	The function $v$ extends plurisubharmonically to $D\setminus P_D$, and the corresponding extension of $u$ satisfies \eqref{equation:barrier} on $D\setminus P_D$.
\end{lemma}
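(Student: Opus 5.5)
The plan is to reduce the statement to a local upper bound for $v$ near points of $\Sigma\setminus P_D$, and then use the classical extension theorem for plurisubharmonic functions across analytic sets. The barrier inequality then passes to the extension by semicontinuity.

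\emph{Step 1 (reduction).} Fix $y_0\in(\Sigma\cap D)\setminus P_D$ and a small connected neighborhood $W\subset D\setminus P_D$ of $y_0$ with $\Sigma\cap W\subset\Sigma'_W$, a nowhere dense analytic subset of $W$. Then $W\setminus\Sigma'_W\subset W\setminus\Sigma$, and $v$ is plurisubharmonic there. Suppose $v$ is bounded above on $(W\setminus\Sigma)\cap W_0$ for some neighborhood $W_0$ of $y_0$. Then the Grauert--Remmert extension theorem extends $v|_{W_0\setminus\Sigma'_W}$ to a plurisubharmonic function $\tilde v$ on $W_0$, given by $\tilde v(y)=\limsup_{y'\to y,\,y'\notin\Sigma'_W}v(y')$. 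On $(\Sigma'_W\setminus\Sigma)\cap W_0$ this limsup agrees with $v$, because a plurisubharmonic function equals its own limsup off a nowhere dense analytic set. Hence these local extensions agree with $v$ on $D\setminus\Sigma$. Since $D\setminus\Sigma$ is dense, they agree with each other on overlaps and glue to a plurisubharmonic function on $D\setminus P_D$. The same holds for $u=\max\{v,-\log\delta_U\}$.

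\emph{Step 2 (barrier).} On $D\setminus\Sigma$, inequality \eqref{equation:barrier} reads $u\geq\log c_n-\log\dist(\cdot,U\setminus\Omega)$. The right-hand side is continuous on $D$, and the extension of $u$ at a point of $\Sigma$ is a limsup of values of $u$ over the dense set $D\setminus\Sigma$. Taking limsup on both sides preserves the inequality, so \eqref{equation:barrier} holds on all of $D\setminus P_D$.

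\emph{Step 3 (local upper bound; main obstacle).} We need $|(A_\alpha(\sigma_i))_l|\leq M^{|\alpha|-1}$ near $y_0$, uniformly in $\alpha$, $l$ and $i$. Lemma~\ref{lemma:canonical}\,(3) controls each coefficient separately: near $y_0$ the functions $e_j((A_\alpha(\sigma_1))_l,\dots)$ are holomorphic, hence bounded. This alone gives no control of the growth in $\alpha$.

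To handle all $\alpha$ at once, I would pass to a generating function. For $1\leq l\leq n$ consider
\[
	F_l(y,\xi,t)=\prod_{i=1}^c\bigl(t-(S_{\sigma_i}(\xi)-y-\xi)_l\bigr),
\]
defined for $y\in W\setminus\Sigma$ and small $\xi$. It is intrinsic by Lemma~\ref{lemma:canonical}\,(1)--(2). Its coefficients in $t$ are symmetric functions of the $c$ power series $R_i(\xi)=\sum_\alpha(A_\alpha(\sigma_i))_l\xi^\alpha$. I would rerun the argument of Step~5 of Lemma~\ref{lemma:canonical} on these, applying Chevalley's theorem to the algebraic family $\Gamma$ together with finitely many Taylor coefficients at a time, or equivalently to suitable linear combinations $\sum_\alpha\lambda_\alpha A_\alpha$. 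The goal is to show that all $\xi$-Taylor coefficients of $F_l$ are holomorphic near $y_0$ off $P_D$.

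Next I would show that these coefficients are bounded by $CR^{-|\alpha|}$ near $y_0$. For this, the plan is to show that $F_l$ extends holomorphically, as a function of $(y,\xi)$, to $W_0\times\varrho\D^n$ for some $\varrho>0$. Granting that extension, a first Cauchy estimate bounds $F_l$ there. Cauchy's bound for the roots of a monic polynomial in $t$ then bounds each branch $(R_i(\xi))_l$ uniformly for $|\xi|<\varrho$. A second Cauchy estimate in $\xi$ gives $|(A_\alpha(\sigma_i))_l|\leq C\varrho^{-|\alpha|}$, and hence $v\leq\log^+C+\log^+(1/\varrho)+\mathrm{const}$ near $y_0$, as required in Step~1.

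The delicate point is exactly this holomorphic extension of $F_l$ in $\xi$ on a polydisc of radius independent of $y$ near $\Sigma$. If a direct Riemann-extension argument in the variables $(y,\xi)$ fails, I would fall back on applying Lemma~\ref{lemma:canonical}'s construction with the augmented data. That means treating $\xi$ as an extra parameter on $W\times\varrho\D^n$, where the restricted sprays are defined uniformly through a local frame of $E$.
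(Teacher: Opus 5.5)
There is a genuine gap, and it sits in your Step~3, which is the whole content of the lemma. Steps~1 and~2 are fine and match the paper. Once $v$ is bounded above near each $y_0\in\Sigma\setminus P_D$:
\begin{itemize}
\item Grauert--Remmert gives the local extension;
\item the local extensions glue, since plurisubharmonic functions agreeing almost everywhere agree everywhere;
\item the barrier inequality passes to the limsup.
\end{itemize}

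Step~3, however, never proves the bound uniform in $\alpha$, and neither route you suggest produces it. A Riemann extension of $F_l$ across $\Sigma\times\varrho\D^n$ requires $F_l$ to be defined and bounded on $(W_0\setminus\Sigma)\times\varrho\D^n$ with $\varrho$ independent of $y$. That is equivalent to the bound you are trying to prove, so the argument is circular. Rerunning the Chevalley argument of Step~5, with or without $\xi$ as a parameter, only gives holomorphy or meromorphy of finitely many coefficient functions at a time. It never gives a growth estimate in $|\alpha|$. (Holomorphy of the $\xi$-coefficients of $F_l$ near $y_0$ needs no algebra anyway: the root bound makes each $\mu_i$ locally bounded there, and Riemann extension applies.)

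The missing idea is that you already know a bound with the correct exponential scaling on a compact set surrounding $y_0$ and avoiding $\Sigma$. The maximum principle then transports that bound inward, coefficient by coefficient.
\begin{enumerate}
\item Choose a polydisc $\Delta=\Delta'\times\Delta_n$ centered at $y_0$ with $\overline\Delta\subset D\setminus P_D$. Arrange that its vertical boundary $L=\overline{\Delta'}\times\partial\Delta_n$ misses the nowhere dense analytic set $\Sigma'\supset\Sigma$: take $\Delta_n$ in a line through $y_0$ meeting $\Sigma'$ only at $y_0$, then shrink $\Delta'$.
\item Since $v$ is plurisubharmonic on $D\setminus\Sigma$, it is bounded above on the compact set $L$, say by $M$. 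Hence $|(A_\alpha(\sigma_i))_l|\le e^{(|\alpha|-1)M}$ on $L$ for all $\alpha,l,i$, and so $|e_j|\le\binom cj e^{j(|\alpha|-1)M}$ on $L$.
\item The $e_j$ are holomorphic near $\overline\Delta$. The maximum principle on the discs $\{w'\}\times\Delta_n$ gives the same bound on all of $\Delta$.
\item The root bound $|\mu_i|\le 2\max_j|e_j|^{1/j}$ then gives $|\mu_i|\le 2c\,e^{(|\alpha|-1)M}$, that is, $v\le M+\log(2c)$ on $\Delta\setminus\Sigma$.
\end{enumerate}

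So your remark that Lemma~\ref{lemma:canonical}\,(3) ``gives no control of the growth in $\alpha$'' is exactly where the argument goes astray. Combined with this maximum-principle step, it gives precisely that control. The generating function $F_l$ is then unnecessary; it could only be completed by running the same maximum-principle argument on its Taylor coefficients.
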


\begin{proof}
	Fix $y_0\in\Sigma\setminus P_D$, and choose a connected neighborhood $N\subset D\setminus P_D$ of $y_0$ with a nowhere dense analytic subset $\Sigma'$ containing $\Sigma\cap N$.
	Since $\Sigma'$ is nowhere dense, there is a complex line $\ell$ through $y_0$ on which $y_0$ is an isolated point of $\Sigma'$.
	Choose a disc $\Delta_n\subset\ell$ centered at $y_0$ whose closure lies in $N$ and meets $\Sigma'$ only at $y_0$.
	In affine coordinates centered at $y_0$ in which $\ell$ is the last coordinate axis, choose, by compactness of $\partial\Delta_n$, a polydisc $\Delta'\subset\C^{n-1}$ centered at $0$ so small that the polydisc $\Delta=\Delta'\times\Delta_n$ has $\overline\Delta\subset N$ and its vertical boundary
	\[
		L=\overline{\Delta'}\times\partial\Delta_n
	\]
	disjoint from $\Sigma'$.
	Since $L$ is a compact subset of $D\setminus\Sigma$, let $M\geq0$ bound $v$ on $L$.
	For $|\alpha|=m\geq2$ and $1\leq l\leq n$, the values $\mu_i=(A_\alpha(\sigma_i))_l$ satisfy $|\mu_i|\leq e^{(m-1)M}$ on $L$.
	Let $e_j$ denote the meromorphic extensions of their elementary symmetric functions from Lemma~\ref{lemma:canonical}\,(3).
	They are holomorphic on $D\setminus P_D$, hence near $\overline\Delta$, and off $\Sigma$ they are the elementary symmetric functions of $\mu_1,\dots,\mu_c$.
	The maximum principle on the vertical discs gives
	\[
		|e_j|\leq\binom cje^{j(m-1)M}\quad\text{on }\Delta.
	\]
	Since the $\mu_i$ are the roots of $T^c-e_1T^{c-1}+\dots+(-1)^ce_c$, the classical bound for roots of polynomials gives
	\[
		|\mu_i|\leq2\max_j|e_j|^{1/j}\leq2c\,e^{(m-1)M}\quad\text{on }\Delta\setminus\Sigma.
	\]
	Indeed, put $R=\max_j|e_j|^{1/j}$; if $R=0$, all roots vanish, while for $R>0$ and $|T|>2R$,
	\[
		\sum_{j=1}^c|e_j||T|^{c-j}
		\leq |T|^c\sum_{j=1}^c(R/|T|)^j<|T|^c,
	\]
	so $T$ cannot be a root.
	Hence $v\leq M+\log(2c)$ on $\Delta\setminus\Sigma$.
	By the removable singularity theorem for plurisubharmonic functions \cite{Grauert1956MZ}, $v|_{\Delta\setminus\Sigma'}$ extends to a plurisubharmonic function $\tilde v$ on $\Delta$ with $\tilde v(y)=\limsup_{x\to y,\,x\notin\Sigma'}v(x)$ for $y\in\Sigma'$.
	It agrees with $v$ on $\Delta\setminus\Sigma$, since plurisubharmonic functions which agree almost everywhere agree everywhere; for the same reason, such extensions agree on overlaps.
	Moreover, $0\leq\tilde v\leq M+\log(2c)$, so $\tilde u=\max\{\tilde v,-\log\delta_U\}$ is finite and extends $u$.
	For $y\in\Sigma\cap\Delta$, choose $x_k\in\Delta\setminus\Sigma'$ tending to $y$ with $v(x_k)\to\tilde v(y)$.
	If $U\setminus\Omega\neq\emptyset$, passing to the limit in \eqref{equation:barrier} using the continuity of $\delta_U$ and $\dist(\cdot,U\setminus\Omega)$ proves the same inequality for $\tilde u(y)$; if $U\setminus\Omega=\emptyset$, it holds by our convention.
\end{proof}

\begin{proof}[Proof of Theorem~\ref{theorem:main}]
	Put $P=\bigcup_DP_D$.
	Extending the meromorphic functions of Lemma~\ref{lemma:canonical}\,(3) for each component $D$ by zero to the other components of $\Omega\cap U$, we see from the discussion above that $P$ is, locally in $\Omega\cap U$, the union of the polar sets of finitely many meromorphic functions on $\Omega\cap U$.
	The functions provided by Lemma~\ref{lemma:invisible} on the disjoint open sets $D\setminus P_D$ combine into a plurisubharmonic function on $(\Omega\cap U)\setminus P$ satisfying \eqref{equation:barrier}.
	Thus $(\Omega\cap U)\setminus P$ is pseudoconvex by Lemma~\ref{lemma:barrier}, hence Stein.
	Therefore $\Omega$ is meromorphically pseudoconvex in $U$.
\end{proof}

\section{Hartogs hulls and pseudoconvexity}
\label{section:main}

For $0<a,b<1$ we consider the polydisc $\D^n\subset\C\times\C^{n-1}$ with coordinates $(z,w)$ and the Hartogs figure
\[
	H(a,b)=\{(z,w)\in\D^n:|z|>a\ \text{or}\ |w|_\infty<b\},\qquad|w|_\infty=\max_j|w_j|.
\]
A \emph{Hartogs figure} in a complex manifold $Y$ of dimension $n\geq2$ consists of parameters $0<a,b<1$ and a holomorphic embedding $\Phi$ of a neighborhood of $\overline{\D^n}$ into $Y$; its \emph{image} is $\Phi(H(a,b))$ and its \emph{hull} is $\Phi(\D^n)$.
We write $H=H(a,b)$ when the parameters are clear, and $\rho\D^n$ for the polydisc of polyradius $(\rho,\dots,\rho)$.
For $n\geq2$, an open set $G\subset\C^n$ is pseudoconvex if and only if $\Phi(\D^n)\subset G$ for every Hartogs figure $\Phi$ in $\C^n$ with $\Phi(H)\subset G$ \cite[Theorem~II.3.4]{Fritzsche2002}.
This characterization applies componentwise, since $H(a,b)$ is connected.
To obtain our figures from embeddings defined only on $\D^n$, if $\Phi(x_0)\notin G$, choose $\max\{a,b,|x_0|_\infty\}<r<1$ and replace $\Phi(x)$ by $\Phi(rx)$ with parameters $a/r,b/r$; the new embedding is defined near $\overline{\D^n}$ and its hull still contains $\Phi(x_0)$.
If the inner polydisc has different coordinate radii, replace them by their minimum to obtain a contained figure of the form $H(a,b)$ with the same hull.
If $G\subset B$ for a ball $B$, the hull of every such figure lies in $B$, since $B$ is pseudoconvex.
If $G$ is not pseudoconvex, we may therefore choose such a figure with $\Phi(\overline{\D^n})\subset B$ and $\Phi(\D^n)\not\subset G$, by shrinking its hull slightly and rescaling its parameters.

We next establish a continuity principle for level sets.

\begin{lemma}
	\label{lemma:level-sets}
	Let $G\subset\C^m$, $m\geq2$, be a pseudoconvex open set, $Q\subset\C^m$ a bounded domain, $F$ a holomorphic function on a neighborhood of $\overline Q$ which is not constant on $Q$, and $\gamma:[0,1]\to\C$ a path.
	Put $A_t=\{x\in\overline Q:F(x)=\gamma(t)\}$.
	If $A_0\subset G$ and $A_t\cap\partial Q\subset G$ for all $t$, then $A_t\subset G$ for all $t$.
\end{lemma}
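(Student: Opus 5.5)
We argue by a connectedness argument in $t$, using a plurisubharmonic exhaustion of $G$ and the maximum principle on the level sets $A_t$. Let $T=\{t\in[0,1]:A_t\subset G\}$. Then $0\in T$, and we show that $T$ is open and closed in $[0,1]$.

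\emph{Openness.} This follows from compactness. Each $A_t$ is a compact subset of $\overline Q$, and the map $t\mapsto A_t$ is upper semicontinuous: if $t_k\to t$ and $x_k\in A_{t_k}$, then every limit point of $(x_k)$ lies in $A_t$ by continuity of $F$ and $\gamma$. So if $A_t\subset G$, then $A_{t'}$ lies in the open set $G$ for all $t'$ near $t$.

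\emph{Closedness.} Here pseudoconvexity enters. Take a continuous plurisubharmonic exhaustion function $\varphi$ of $G$. Since the set $K=\bigcup_tA_t\cap\partial Q$ is compact (by the same semicontinuity) and lies in $G$, put $c=\max_K\varphi$. Suppose $t_k\to t$ with $t_k\in T$.

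The key claim is $\varphi\le c$ on $A_{t_k}$ for each $k$. Indeed, $\varphi|_{A_{t_k}}$ is continuous on the compact set $A_{t_k}$. On the analytic set $A_{t_k}\cap Q$, the level set of the nonconstant function $F$, which has pure dimension $m-1\ge1$, the function $\varphi$ is plurisubharmonic. By the maximum principle for plurisubharmonic functions on analytic sets, the maximum over $A_{t_k}$ is therefore attained on $A_{t_k}\cap\partial Q\subset K$. To avoid singular points, I would argue via normalization or by restricting to holomorphic discs through each point of $A_{t_k}\cap Q$ inside the level set. If an interior maximum were attained, then $\varphi$ would be constant on the irreducible component through that point. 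That component is a closed analytic subset of $Q$ of positive dimension, and since $Q$ is bounded it is not compact, so its closure meets $\partial Q$. By continuity the same maximum value is then attained on $\partial Q$.

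Hence $A_{t_k}\subset\{\varphi\le c\}$, which is compact in $G$. Limit points of $A_{t_k}$ cover $A_t$: every $x\in A_t$ is a limit of points of $A_{t_k}$. For interior points $x\in Q$, this is Hurwitz/openness of the nonconstant map $F$ near $x$ restricted to a line on which $F$ is nonconstant; for $x\in\partial Q$ we already know $x\in G$. Therefore $A_t\cap Q\subset\{\varphi\le c\}\subset G$, so $t\in T$.

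The main obstacle is the maximum principle step on possibly singular, possibly noncompact level sets. I would handle it with the local-disc argument: at an interior maximum point, a one-dimensional irreducible curve germ inside $A_{t_k}$ through it gives constancy of $\varphi$ along the component, by the maximum principle for subharmonic functions on its normalization, and hence propagation to the boundary. Alternatively, I could first perturb $\gamma$ to avoid the finitely many critical values locally and then use closedness.
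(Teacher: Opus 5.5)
Your proposal is correct and follows essentially the same route as the paper: $\{t:A_t\subset G\}$ is open by compactness, and it is closed by bounding a continuous plurisubharmonic exhaustion on each $A_{t_k}$ by its maximum on the compact set $\bigcup_t A_t\cap\partial Q$, together with the openness of $F$ on $Q$. The only difference is that the paper chooses the exhaustion $\psi$ with $\psi-|x|^2$ plurisubharmonic, so $\psi$ has no local maximum at all on a positive-dimensional analytic set; this avoids your strong-maximum-principle step of propagating constancy along an irreducible component through singular points.
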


\begin{proof}
	Let $\psi$ be a continuous plurisubharmonic exhaustion function of $G$ such that $\psi-|x|^2$ is plurisubharmonic; for instance, add $|x|^2$ to any continuous plurisubharmonic exhaustion function.
	Then, for every nonconstant holomorphic disc $f$ in $G$, the subharmonic function $\psi\circ f$ is constant on no nonempty open set, since $\psi\circ f-|f|^2$ is subharmonic and $|f|^2$ is harmonic on no nonempty open set; hence $\psi\circ f$ has no local maximum by the maximum principle.
	As every point of an analytic subset of $G$ of pure positive dimension lies on a nonconstant holomorphic disc in it, $\psi$ has no local maximum on such a subset.
	The set $L=\bigcup_t(A_t\cap\partial Q)$ is compact in $G$; choose $M$ with $\psi\leq M$ on $L$, and put $K=\{\psi\leq M\}\Subset G$.
	The set $I=\{t\in[0,1]:A_t\subset G\}$ is open by compactness of $\overline Q$, and $0\in I$.
	For $t\in I$, every irreducible component of $A_t\cap Q$ has dimension $m-1>0$, so its closure meets $\partial Q$, as $Q$ contains no compact analytic sets of positive dimension.
	For each such component $Z$, its closure is compact and $\overline Z\setminus Z\subset A_t\cap\partial Q\subset L$.
	Since $\overline Z\subset A_t\subset G$, the function $\psi$ attains its maximum over $\overline Z$ on $\overline Z\setminus Z\subset L$, so $Z\subset K$; hence $A_t\subset K$.
	If $t_j\in I$ converges to $t$, every point of $A_t\cap Q$ is a limit of points of $A_{t_j}\subset K$, since $F$ is open on $Q$.
	Together with $A_t\cap\partial Q\subset L$, this gives $A_t\subset G$.
	Thus $I$ is closed as well as open, so $I=[0,1]$.
\end{proof}

The next lemma allows a hypersurface to be removed from a Hartogs figure.
It reflects the pseudoconcavity of the complement of a pseudoconvex domain and the plurisubharmonicity of maxima over analytic multifunctions; see S{\l}odkowski \cite{Slodkowski1981MA} for a general framework.

\begin{lemma}
	\label{lemma:hypersurface}
	Let $0<a,b<1$, let $h\not\equiv0$ be a holomorphic function on $\D^n$, $n\geq2$, and put $T=\{h=0\}$.
	If $G\subset\D^n$ is a pseudoconvex open set with $H(a,b)\setminus T\subset G$, then $\D^n\setminus T\subset G$.
\end{lemma}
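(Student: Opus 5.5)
The plan is to combine the classical continuity principle with Lemma~\ref{lemma:level-sets}, which propagates level sets of a holomorphic function inside $G$. I use the level sets of $h$ itself, after a small perturbation, as the moving family; these avoid $T$ automatically. Fix a point $x_0=(z_0,w_0)\in\D^n\setminus T$, and set $\lambda=h(x_0)\neq0$. The aim is to show $x_0\in G$.

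First shrink the situation. Choose $\max\{a,b,|x_0|_\infty\}<r<1$ and let $Q=r\D^n$, so that $h$ is holomorphic near $\overline Q$ and nonconstant on $Q$. Since $h\not\equiv0$, $h$ is nonconstant. Next choose a path $\gamma:[0,1]\to\C\setminus\{0\}$ from a value $\gamma(0)$ to $\gamma(1)=\lambda$, with level sets $A_t=\{x\in\overline Q:h(x)=\gamma(t)\}$. These lie in $\overline Q\setminus T$. To apply Lemma~\ref{lemma:level-sets} (with $m=n$) I need two things:
\begin{enumerate}
\item $A_0\subset G$;
\item $A_t\cap\partial Q\subset G$ for all $t$.
\end{enumerate}
Then $x_0\in A_1\subset G$.

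The difficulty is that $\partial Q$ is not contained in $H(a,b)$: the part $\{|z|\leq a,\ |w|_\infty=r\}$ is missing. I therefore replace $Q$ by a domain adapted to the figure. Take instead
\[
	Q=\{|z|<r,\ |w|_\infty<b'\}\cup\{a'<|z|<r,\ |w|_\infty<r\},
\]
which is not a product. An alternative is to first use the classical Hartogs argument fibrewise: for fixed $w$ the discs $\{|z|<r\}\times\{w\}$ have boundary circles in $H$, and one slides $w$ outward from $|w|_\infty<b$. Concretely, I would run the continuity argument in the parameter $w$, using the level-set lemma inside $Q_w$. The cleaner route is to take $Q=\{|z|<r\}\times\{|w|_\infty<\rho\}$, starting at $\rho<b$ and increasing $\rho$. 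For $\rho<b$ we have $Q\subset H$, so $\overline Q\setminus T\subset G$ trivially. The supremum $\rho^*$ of the good $\rho$ should then be shown to equal $r$, by an openness–closedness argument. Closedness needs a barrier: near $\rho^*$, the new boundary points $\{|w|_\infty=\rho\}$ with $|z|\leq a$ are not controlled.

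Hence the main obstacle is controlling points of $\overline Q\setminus T$ that approach $T$, where no compactness in $G$ is available. Here the level-set lemma enters. A point $x\notin T$ lies on $\{h=h(x)\}$, a hypersurface disjoint from $T$. I deform the value $h(x)$ along a path in $\C\setminus\{0\}$ to a value $\mu$ whose level set inside $Q$ is entirely contained in $H$. Such $\mu$ should exist with $|\mu|$ large relative to $h$ on the inner part, or else arise by rescaling inside $\{|w|_\infty<b\}$. The boundary pieces $A_t\cap\partial Q$ must then lie in $G$, which is arranged by choosing $Q$ whose boundary lies in $H\cup(\text{region already known in }G)$ minus $T$. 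I expect the fiddly part to be the bookkeeping that these boundary intersections avoid the uncontrolled region. Failing this, I would fall back on Słodkowski-style plurisubharmonicity of $t\mapsto\max_{A_t}\psi$ for a psh exhaustion $\psi$ of $G$.
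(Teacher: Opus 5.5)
There is a genuine gap. You locate the obstacle correctly, but none of your routes gets past it. The points that need proof are those in $\D^n\setminus H(a,b)=\{|z|\le a,\ b\le|w|_\infty<1\}$. This set is connected and not relatively compact in $\D^n$, so every domain $Q\Subset\D^n$ through such a point has boundary meeting it, and there nothing is known about $G$.

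Choosing the values $\gamma(t)$ in $\C\setminus\{0\}$ keeps the level sets $\{h=\gamma(t)\}$ off $T$. It does not keep them off $\D^n\setminus G$ on that part of $\partial Q$. So hypothesis (2) of Lemma~\ref{lemma:level-sets} cannot be verified for $h$ itself, and the ``bookkeeping'' you postpone is the whole difficulty. The continuity argument in $\rho$ is circular for the same reason: to push past $\rho^*$ you need exactly the missing control at $\{|z|\le a,\ |w|_\infty=\rho\}$, and near $T$ there is no compactness in $G$. Your fallback is in the right spirit but uses the wrong quantity. A maximum of an exhaustion of $G$ over the level sets $A_t$ presupposes that the $A_t$ already lie in $G$.

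The missing idea is to encode the unknown part of the complement as a plurisubharmonic function of $w$, and to twist $h$ by an exponential.

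\emph{Setup.} Fix $a<\rho_1<1$ and put $\varphi(w)=\max\{\log|h(z,w)|:|z|\le\rho_1,\ (z,w)\notin G\}$, with $\varphi(w)=-\infty$ if this set is empty. Then $\varphi$ is upper semicontinuous, and $\varphi=-\infty$ on $\{|w|_\infty<b\}$: there $\D\times\{w\}\subset H(a,b)$, so the fibre of the complement lies in $T$.

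\emph{Conclusion.} Once $\varphi$ is plurisubharmonic, connectedness of $\D^{n-1}$ forces $\varphi\equiv-\infty$. That is exactly $(\rho_1\overline\D\times\D^{n-1})\setminus T\subset G$.

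\emph{Plurisubharmonicity.} Test on complex lines $w=w_0+\lambda\theta$ against $\operatorname{Re}q$, where $q$ is a holomorphic polynomial with $\varphi(w_0+\lambda\theta)<\operatorname{Re}q(\lambda)$ on $|\lambda|=\epsilon$. Suppose some $(z_0,w_0)\notin G$ with $|z_0|\le\rho_1$ had $|h(z_0,w_0)|>e^{\operatorname{Re}q(0)}$. Apply Lemma~\ref{lemma:level-sets} in the two variables $(z,\lambda)$ to:
\begin{enumerate}
\item the pseudoconvex slice of $G$;
\item the bidisc $\rho_1\D\times\epsilon\D$;
\item the twisted function $F(z,\lambda)=h(z,w_0+\lambda\theta)e^{-q(\lambda)}$, perturbed by $e^{\eta z}$ if needed to be nonconstant;
\item the ray of values $((1-t)R+t)F(z_0,0)$ with $R\gg1$, so that $A_0=\emptyset$.
\end{enumerate}
Along this ray $|F|\ge|F(z_0,0)|>1$. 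On the side $|z|=\rho_1$, which lies in $H(a,b)$, this keeps the level sets off $T$. On the side $|\lambda|=\epsilon$ it keeps them off the complement of $G$, because there $|F|<1$ on the complement by the choice of $q$. The lemma then puts $(z_0,w_0)$ in $G$, a contradiction.

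This twist by $e^{-q}$ is what your plan lacks. It reduces to a bidisc whose two boundary pieces are controlled by two different mechanisms, and this makes the boundary hypothesis of Lemma~\ref{lemma:level-sets} automatic.
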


\begin{proof}
	Fix $a<\rho_1<1$.
	For $w\in\D^{n-1}$ let $K_w=\{z\in\C:|z|\leq\rho_1,\ (z,w)\notin G\}$, which is compact, and put $\varphi(w)=\max_{z\in K_w}\log|h(z,w)|$, where $\varphi(w)=-\infty$ if $K_w=\emptyset$.
	Since $G$ is open, $\varphi$ is upper semicontinuous.
	If $|w|_\infty<b$, then $\D\times\{w\}\subset H(a,b)$, so $K_w\subset\{z:h(z,w)=0\}$ and $\varphi(w)=-\infty$.
	We allow a plurisubharmonic function to be identically $-\infty$.
	We show below that $\varphi$ is plurisubharmonic; since it equals $-\infty$ on the nonempty open set $b\D^{n-1}$ and $\D^{n-1}$ is connected, it then follows that $\varphi\equiv-\infty$ on $\D^{n-1}$.
	Thus $K_w\subset\{z:h(z,w)=0\}$ for all $w$, that is, $(\rho_1\overline\D\times\D^{n-1})\setminus T\subset G$, and the lemma follows since $\rho_1<1$ is arbitrary.

	By the sub-mean value characterization of plurisubharmonicity and the density of real parts of holomorphic polynomials in the space of continuous functions on a circle, it suffices to prove the following.
	Let $w_0\in\D^{n-1}$, $\theta\in\C^{n-1}$ and $\epsilon>0$ be such that $w_0+\lambda\theta\in\D^{n-1}$ for $|\lambda|\leq\epsilon$, and let $q$ be a holomorphic polynomial with $\varphi(w_0+\lambda\theta)<\operatorname{Re}q(\lambda)$ for $|\lambda|=\epsilon$.
	Then $\varphi(w_0)\leq\operatorname{Re}q(0)$.
	For this reduction, approximate the upper semicontinuous boundary values by continuous functions $g_k$ decreasing to them, and choose real trigonometric polynomials $p_k$ with $g_k<p_k<g_k+1/k$ on the circle.
	Each $p_k$ is the real part of a holomorphic polynomial there, so the asserted comparison bounds $\varphi(w_0)$ by the mean of $p_k$; taking limits gives the sub-mean value inequality.

	Suppose to the contrary that $|h(z_0,w_0)|>e^{\operatorname{Re}q(0)}$ for some $z_0\in K_{w_0}$.
	The open set $G'=\{(z,\lambda):(z,w_0+\lambda\theta)\in G\}$ is Stein, hence pseudoconvex: the graph of the affine map $(z,\lambda)\mapsto(z,w_0+\lambda\theta)$ over $G'$ is a closed complex submanifold of the Stein manifold $\C^2\times G$.
	The function $F(z,\lambda)=h(z,w_0+\lambda\theta)e^{-q(\lambda)}$ is holomorphic near the closure of the bidisc $Q=\rho_1\D\times\epsilon\D$.
	By the choice of $q$, we have $|F|<1$ on the compact set $K'=\{(z,\lambda):|\lambda|=\epsilon,\ z\in K_{w_0+\lambda\theta}\}$, while $|F(z_0,0)|>1$.
	Replacing $F(z,\lambda)$ by $F(z,\lambda)e^{\eta z}$ for a small $\eta>0$, which preserves these strict inequalities by compactness, we may assume that $F$ is not constant, since $F(z,\lambda)e^{\eta z}$ is constant for at most one $\eta$.
	Put $c_0=F(z_0,0)$, and apply Lemma~\ref{lemma:level-sets} to $G'$, $Q$, $F$ and the path $\gamma(t)=((1-t)R+t)c_0$, where $R>1$ is so large that $A_0=\emptyset$.
	If $(z,\lambda)\in A_t\cap\partial Q$, then $|F(z,\lambda)|\geq|c_0|>1$, so $h(z,w_0+\lambda\theta)\neq0$ and $(z,\lambda)\notin K'$.
	Hence $(z,w_0+\lambda\theta)\in H(a,b)\setminus T\subset G$ if $|z|=\rho_1$, and $(z,w_0+\lambda\theta)\in G$ if $|\lambda|=\epsilon$.
	Thus $A_t\cap\partial Q\subset G'$ for all $t$, and Lemma~\ref{lemma:level-sets} gives $(z_0,0)\in A_1\subset G'$, contrary to $z_0\in K_{w_0}$.
\end{proof}

We now show that, for every $0<\rho<1$, the part of a smaller Hartogs hull $\Phi(\rho\D^n)$ outside a meromorphically pseudoconvex domain lies in a hypersurface of $\Phi(\rho\D^n)$.

\begin{proposition}
	\label{proposition:hull}
	Let $\Omega$ be a domain in a complex manifold $Y$ which is meromorphically pseudoconvex in an open set $V\subset Y$, and let $\Phi$ be a Hartogs figure in $Y$ such that $\Phi(H)\subset\Omega$ and $\Phi(\overline{\D^n})\subset V$.
	Then for every $0<\rho<1$ there is a hypersurface $T$ of $\rho\D^n$ such that $\Phi(\rho\D^n\setminus T)\subset\Omega$.
\end{proposition}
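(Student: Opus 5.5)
Pull everything back by $\Phi$ to the polydisc and combine Levi's extension theorem with Lemma~\ref{lemma:hypersurface}.

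\textbf{Setup.} By definition there is a set $P\subset\Omega\cap V$ such that $(\Omega\cap V)\setminus P$ is Stein and $P$ is, locally in $\Omega\cap V$, the union of the polar sets of finitely many meromorphic functions on $\Omega\cap V$. Since $\Phi(\overline{\D^n})$ is compact in $V$, finitely many meromorphic functions $f_1,\dots,f_N$ on $\Omega\cap V$ suffice near the compact set $\Phi(\overline{\rho'\D^n})\cap\overline{\Phi(H)}$, for $\rho<\rho'<1$.

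More cleanly, I work on the open set $\Phi^{-1}(\Omega\cap V)\cap\D^n$, which contains $H$. Put $G=\Phi^{-1}((\Omega\cap V)\setminus P)\cap\D^n$. It is Stein, since it is an open subset of the Stein manifold $(\Omega\cap V)\setminus P$ that is locally pseudoconvex in $\D^n$; hence it is pseudoconvex in $\C^n$.

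\textbf{Extension of the poles.} Shrink to a slightly smaller figure $H'=H(a',b')\cap\rho'\D^n$ with $a<a'$, $b'<b$ and $\rho<\rho'<1$, rescaled so that its closure lies in $H$. Cover $\overline{H'}$ by finitely many open sets $W_i\subset H$ on each of which $\Phi^{-1}(P)$ is the union of the polar sets of finitely many pulled-back meromorphic functions $f_{i,1}\circ\Phi,\dots$. All these functions are meromorphic on all of $H$, since they are meromorphic on $\Omega\cap V\supset\Phi(H)$. Hence $\Phi^{-1}(P)\cap H'$ is contained in the union $Z$ of the polar sets of a finite list $g_1,\dots,g_N$ of meromorphic functions on $H$. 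By Levi's extension theorem each $g_j$ extends meromorphically to $\D^n$, or at least to a neighborhood of $\overline{\rho'\D^n}$ after rescaling. Near $\overline{\rho'\D^n}$ its polar set is contained in a hypersurface $\{h_j=0\}$, obtained by writing $g_j$ locally as a quotient and, on a polydisc (where Cousin II holds), globally as $g_j=p_j/h_j$ with $h_j\not\equiv0$ holomorphic. Put $h=\prod_j h_j$ and $T_0=\{h=0\}$. Then $H'\setminus T_0\subset G$.

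\textbf{Conclusion.} Rescale $\rho'\D^n$ to $\D^n$ and apply Lemma~\ref{lemma:hypersurface} to the pseudoconvex open set $G\cap\rho'\D^n$, the figure $H'$ and the function $h$. This gives $\rho'\D^n\setminus T_0\subset G$, so $\Phi(\rho\D^n\setminus T)\subset\Omega$ with $T=T_0\cap\rho\D^n$. If $h$ has no zeros there, then $T$ is empty, which is still a hypersurface by the paper's convention.

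\textbf{Main obstacle.} The delicate step is making the finitely many meromorphic functions genuinely global on the Hartogs figure. The definition gives $P$ only locally as a union of polar sets, but each function used is meromorphic on all of $\Omega\cap V$, which is what makes the pullbacks meromorphic on the whole connected set $H$, so that Levi's theorem applies. A secondary point is presenting each extension as $p_j/h_j$ on a polydisc, which Cousin II provides. If that is awkward, I would instead take the union of the polar sets of the extensions directly: it is a hypersurface of $\rho'\D^n$ (or empty), so it is the zero set of a single holomorphic function on the polydisc, since divisors there are principal.
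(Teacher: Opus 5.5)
Your proposal is correct and follows the paper's proof. You cover the compact set $\Phi(\overline{H'})$ to get finitely many global meromorphic functions whose polar sets contain $P$ there, extend them by Levi's theorem, take a common denominator $h$ on the polydisc, and apply Lemma~\ref{lemma:hypersurface} to the pseudoconvex set $\Phi^{-1}((\Omega\cap V)\setminus P)$. The differences are cosmetic: the auxiliary radius $\rho'$, applying Levi on $H$ instead of $H'$, and obtaining $h$ as a product of Cousin~II denominators rather than from the coherent sheaf of denominators and Theorem~A.

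The only loose spot is your justification that $G$ is Stein. Being an open subset of a Stein manifold is not enough. The clean reason is that $G$ is biholomorphic to the intersection of the Stein open sets $(\Omega\cap V)\setminus P$ and $\Phi(\D^n)$ of $Y$, and that intersection is Stein.
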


For example, if $\Omega=\C^2\setminus\{0\}$ and $V=\C^2$, the hull $\Phi(\D^2)$ of a Hartogs figure with $\Phi(H)\subset\Omega$ may contain the origin, and then $0\in\Phi(T)$ whenever $0\in\Phi(\rho\D^2)$.

\begin{proof}[Proof of Proposition~\ref{proposition:hull}]
	Since $T$ may be restricted to smaller polydiscs, we may assume $\rho>\max\{a,b\}$.
	Let $P\subset\Omega\cap V$ be as in Definition~\ref{definition:meromorphic}.
	Choose $a<a'<\rho$ and $0<b'<b$, and consider the Hartogs figure
	\[
		H'=\{(z,w)\in\rho\D^n:|z|>a'\ \text{or}\ |w|_\infty<b'\}
	\]
	of $\rho\D^n$, which is connected and has compact closure in $H$.
	Since $\Phi(\overline{H'})$ is a compact subset of $\Omega\cap V$, there are meromorphic functions $f_1,\dots,f_m$ on $\Omega\cap V$ such that $P\cap\Phi(\overline{H'})$ is contained in the union of their polar sets.
	By Levi's extension theorem for meromorphic functions on Hartogs figures \cite{Levi1910} (see also \cite{Siu1974}), each $f_i\circ\Phi|_{H'}$ extends to a meromorphic function on $\rho\D^n$.
	Denote these extensions by $\tilde f_1,\dots,\tilde f_m$.
	The germs $g$ of holomorphic functions with $g\tilde f_i$ holomorphic for all $i$ form a coherent ideal sheaf on $\rho\D^n$, being locally a finite intersection of the ideal sheaves of denominators, and all of its stalks are nonzero.
	By Cartan's Theorem~A, it has a global section $h\in\cO(\rho\D^n)$, $h\not\equiv0$, which is a common denominator of $\tilde f_1,\dots,\tilde f_m$.
	Put $T=\{h=0\}$, taking $h=1$ if $m=0$; then $\Phi^{-1}(P)\cap H'\subset T$.
	The open set $G=\Phi^{-1}((\Omega\cap V)\setminus P)\cap\rho\D^n$ is pseudoconvex, since it is biholomorphic to the intersection of the Stein open sets $(\Omega\cap V)\setminus P$ and $\Phi(\rho\D^n)$ of $Y$, which is Stein.
	As $H'\setminus T\subset G$, Lemma~\ref{lemma:hypersurface}, applied after rescaling $\rho\D^n$ to $\D^n$, gives $\rho\D^n\setminus T\subset G\subset\Phi^{-1}(\Omega)$.
\end{proof}

For meromorphically pseudoconvex domains, Proposition~\ref{proposition:hull} yields pseudoconvexity at boundary points having a neighborhood $W$ such that, for every open set $N\subset W$, the complement $N\setminus\Omega$ either is a hypersurface of $N$ or is not contained in any hypersurface of $N$.

\begin{proposition}
	\label{proposition:local}
	Let $\Omega$ be a meromorphically pseudoconvex domain in a complex manifold $Y$, and let $y_0\in\partial\Omega$.
	Suppose that $y_0$ has a neighborhood $W$ such that, for every open set $N\subset W$, if $N\setminus\Omega$ is contained in a hypersurface of $N$, then $N\setminus\Omega$ is itself a hypersurface of $N$.
	Then $\Omega$ is pseudoconvex at $y_0$.
	This assumption holds if $\Omega$ coincides with the interior of its closure near $y_0$.
\end{proposition}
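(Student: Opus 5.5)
We argue by contradiction via the Hartogs figure characterization of pseudoconvexity. Since $\Omega$ is meromorphically pseudoconvex, $y_0$ has an open neighborhood $V$ in which $\Omega$ is meromorphically pseudoconvex. Shrinking, we choose a coordinate ball $B\ni y_0$ with $\overline B\subset V\cap W$, and we aim to show that $\Omega\cap B$ is pseudoconvex. (We may assume $n\geq2$.) Suppose it is not. By the discussion at the start of this section, there is a Hartogs figure $\Phi$ with $\Phi(H)\subset\Omega\cap B$, $\Phi(\overline{\D^n})\subset B$, and a point $x_0\in\D^n$ with $\Phi(x_0)\notin\Omega$. Choose $\rho<1$ with $x_0\in\rho\D^n$.

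Proposition~\ref{proposition:hull} gives a hypersurface $T$ of $\rho\D^n$ with $\Phi(\rho\D^n\setminus T)\subset\Omega$. Put $N=\Phi(\rho\D^n)$, which is open and contained in $W$. Then $N\setminus\Omega\subset\Phi(T)$, a hypersurface of $N$, so the hypothesis makes $N\setminus\Omega$ itself a hypersurface of $N$. This set is nonempty, since it contains $\Phi(x_0)$.

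The contradiction comes from the pseudoconvexity of $G=\Phi^{-1}(\Omega)\cap\rho\D^n$. Since $N\setminus\Omega$ is closed in $N$, $G$ is the complement in $\rho\D^n$ of the nonempty hypersurface $S=\Phi^{-1}(N\setminus\Omega)$. Such a complement is pseudoconvex, but it is also far from empty, so this alone gives nothing. We use instead that $S$ is disjoint from $\Phi^{-1}(\Omega)\supset H\cap\rho\D^n$. Each irreducible component of $S$ is a hypersurface of $\rho\D^n$ avoiding the Hartogs figure $H\cap\rho\D^n$, which contains a full neighborhood of $\{|z|>a\}$.

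We derive the contradiction as follows. Take $s$ a local defining function of $S$ near $x_0=(z_0,w_0)$. On the slice $\{w=w_0\}$, the analytic set $S\cap(\rho\D\times\{w_0\})$ is a compact subset of $\{|z|\leq a\}$. It is either discrete, hence finite, or the whole disc, which is impossible since the disc meets $H$. Varying $w$ near $w_0$, the slices $S\cap(\rho\D\times\{w\})$ stay inside $\{|z|\leq a\}$, so the projection $S\to\D^{n-1}_w$ is proper near $w_0$. Its image is therefore an open and closed subset of $\rho\D^{n-1}$ by the Remmert--Stein/proper-mapping argument: locally, $S$ is the zero set of a Weierstrass polynomial in $z$. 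Hence the image is all of $\rho\D^{n-1}$, including points with $|w|_\infty<b$, where $\rho\D\times\{w\}\subset H$ misses $S$. This is a contradiction, so $\Omega\cap B$ is pseudoconvex and $\Omega$ is pseudoconvex at $y_0$.

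The main obstacle is justifying the global properness and openness of the projection in the last step. We would handle it by showing that the set of $w$ over which $S$ has a point is open (by continuity of roots of Weierstrass polynomials) and closed (by compactness of the slices in $|z|\leq a$).

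For the final sentence of the proposition, suppose $\Omega=\operatorname{int}\overline\Omega$ near $y_0$, on a neighborhood $W$. If $N\subset W$ and $N\setminus\Omega\subset Z$ for a hypersurface $Z$ of $N$, then $N\setminus\Omega$ has empty interior. Hence $N\subset\overline\Omega$, so $N\subset\operatorname{int}\overline\Omega=\Omega$. Thus $N\setminus\Omega=\emptyset$, which is a hypersurface by convention.
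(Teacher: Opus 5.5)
Your proof is correct, and it matches the paper up to the point where $S=\Phi^{-1}(N\setminus\Omega)$ is shown to be a nonempty hypersurface of $\rho\D^n$ disjoint from $H(a,b)$. Your treatment of the final sentence of the proposition is also the same as the paper's.

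The difference is the last step. The paper notes that $\rho\D^n\setminus S$ is pseudoconvex and contains $H(a,b)\cap\rho\D^n$, which is the image of a Hartogs figure with hull $\rho\D^n$. The Hartogs characterization you already used to produce $\Phi$ then forces $S=\emptyset$. Your remark that pseudoconvexity of the complement ``alone gives nothing'' misses exactly this: it is combined with $H\cap\rho\D^n\subset\rho\D^n\setminus S$, and that combination gives the contradiction in one line.

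You instead prove by hand that a hypersurface of $\rho\D^n$ avoiding the figure is empty:
\begin{enumerate}
\item The projection $S\to\rho\D^{n-1}$ is proper, since $S\subset\{|z|\le a\}$.
\item Its fibres are discrete, since no slice disc lies in $S$.
\item Weierstrass preparation at every point of $S$ (not only at $x_0$) shows the image is open.
\item Closedness and connectedness make the image all of $\rho\D^{n-1}$, contradicting $S\cap\{|w|_\infty<b\}=\emptyset$.
\end{enumerate}
This is a self-contained continuity principle for hypersurfaces. It avoids a second appeal to the pseudoconvexity criterion, at the cost of length.

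One detail needs fixing: you must take $\rho>a$ (the paper takes $\rho>\max\{a,b\}$). Otherwise $\{|z|\le a\}\cap\rho\D=\rho\D$, so the slices need not be compact and the disc $\rho\D\times\{w_0\}$ need not meet $H$. For example, $\{w_1=c\}$ with $b\le|c|<\rho\le a$ avoids $H\cap\rho\D^n$. The fix costs nothing, since Proposition~\ref{proposition:hull} holds for every $\rho$.
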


\begin{proof}[Proof of Proposition~\ref{proposition:local}]
	We may assume $n\geq2$.
	Let $V$ be an open neighborhood of $y_0$ in which $\Omega$ is meromorphically pseudoconvex, let $U\subset V\cap W$ be a coordinate ball centered at $y_0$, and suppose that $\Omega\cap U$ is not pseudoconvex.
	By the preceding Hartogs characterization, there is a Hartogs figure $\Phi$ with $\Phi(\overline{\D^n})\subset U$, $\Phi(H)\subset\Omega\cap U$ and $\Phi(\D^n)\not\subset\Omega$.
	Choose $\max\{a,b\}<\rho<1$ so close to $1$ that $\Phi(\rho\D^n)\not\subset\Omega$, and let $T$ be the hypersurface of $\rho\D^n$ given by Proposition~\ref{proposition:hull}.
	Then $N=\Phi(\rho\D^n)\subset W$ is an open set such that $N\setminus\Omega$ is nonempty and contained in the hypersurface $\Phi(T)$ of $N$.
	By assumption, $S=\Phi^{-1}(N\setminus\Omega)$ is a nonempty hypersurface of $\rho\D^n$, and it does not meet $H(a,b)$ since $\Phi(H)\subset\Omega$.
	The complement $\rho\D^n\setminus S$ is pseudoconvex and contains the image $H(a,b)\cap\rho\D^n$ of a Hartogs figure with hull $\rho\D^n$, so $S=\emptyset$ by \cite[Theorem~II.3.4]{Fritzsche2002}, a contradiction.
	Thus $\Omega\cap U$ is pseudoconvex, hence Stein.

	Suppose next that $\Omega\cap W=\operatorname{int}\overline\Omega\cap W$, and let $N\subset W$ be an open set such that $N\setminus\Omega$ is contained in a hypersurface $Z$ of $N$.
	Then $N\setminus Z\subset\Omega$ is dense in $N$, so $N\subset\overline\Omega$, and hence $N\subset\operatorname{int}\overline\Omega\cap W=\Omega\cap W$.
	Thus $N\setminus\Omega=\emptyset$, and the assumption holds.
\end{proof}

\begin{proof}[Proof of Corollary~\ref{corollary:equivalence}]
	(1)$\Rightarrow$(2): If $\Omega$ is Gromov elliptic, then it is Oka by Gromov's theorem \cite{Gromov1989JAMS}, and it is meromorphically pseudoconvex by Theorem~\ref{theorem:main}, hence pseudoconvex by Proposition~\ref{proposition:local}, hence Stein by the theorem of Docquier and Grauert \cite{Docquier1960MA}.

	(2)$\Rightarrow$(1): Every Stein Oka manifold is Gromov elliptic \cite{Gromov1989JAMS}, \cite[Proposition~5.6.15]{Forstneric2017}.

	(2)$\Rightarrow$(3): Every Stein manifold admits a complete K\"ahler metric \cite{Grauert1956MA}.

	(3)$\Rightarrow$(2): Since $Y$ is Stein and $\Omega=\operatorname{int}\overline\Omega$ admits a complete K\"ahler metric, $\Omega$ is Stein by \cite[Theorem~1(b)]{Diederich1981MA}.
\end{proof}

\section{Proof of Corollary~\ref{corollary:applications}}
\label{section:applications}

\begin{proof}[Proof of (1)]
	Since $K=\overline{\operatorname{int}K}$, the domain $\Omega=\C^n\setminus K$ satisfies $\operatorname{int}\overline\Omega=\C^n\setminus\overline{\operatorname{int}K}=\Omega$.
	If $\Omega$ were Gromov elliptic, it would be Stein by Corollary~\ref{corollary:equivalence}.
	However, since $K\neq\emptyset$ is compact and $n\geq2$, every holomorphic function on $\Omega$ extends to $\C^n$ by Hartogs's extension theorem, so $\Omega$ is not Stein.
	Since $\overline{\B^n}$ is holomorphically convex, $\C^n\setminus\overline{\B^n}$ is an Oka manifold \cite{Kusakabe2024AoM}.
\end{proof}

\begin{proof}[Proof of (2)]
	A holomorphic function vanishing on a nonempty relatively open subset of $\mathbb R^n$ vanishes identically near it.
	Hence, if $N\subset\C^n$ is an open set such that $N\cap\mathbb R^n$ is contained in a hypersurface of $N$, then $N\cap\mathbb R^n=\emptyset$.
	Thus the domain $\Omega=\C^n\setminus\mathbb R^n$ satisfies the assumption of Proposition~\ref{proposition:local} at every boundary point.
	If $\Omega$ were Gromov elliptic, it would be meromorphically pseudoconvex by Theorem~\ref{theorem:main}, hence pseudoconvex by Proposition~\ref{proposition:local}, and hence Stein by the theorem of Docquier and Grauert \cite{Docquier1960MA}.
	In particular, the function $u(z)=-\log\dist(z,\mathbb R^n)=-\log|\operatorname{Im}z|$ would be plurisubharmonic on $\Omega$.
	However,
	\[
		\frac{\partial^2u}{\partial z_2\partial\overline z_2}(i,0,\dots,0)=-\frac14,
	\]
	a contradiction.
	The Oka property of $\Omega$ was recalled after Corollary~\ref{corollary:applications}.
\end{proof}

\begin{proof}[Proof of (3)]
	Let $\Omega\subsetneq\Pj^n$ be a Gromov elliptic domain with Lipschitz boundary.
	Then $\Omega=\operatorname{int}\overline\Omega$, and $\Omega$ is meromorphically pseudoconvex by Theorem~\ref{theorem:main}, so $\Omega$ is pseudoconvex by Proposition~\ref{proposition:local}.
	By the theorem of Ohsawa and Sibony \cite{Ohsawa1998NMJ} for $\mathcal C^2$ boundaries and its extension to Lipschitz boundaries by Harrington \cite[Theorem~1.1]{Harrington2017JGA}, both based on Takeuchi's theorem \cite{Takeuchi1964JMSJ}, $\Omega$ admits a bounded plurisubharmonic exhaustion function, that is, a plurisubharmonic function $\varphi<0$ on $\Omega$ such that $\{\varphi<c\}\Subset\Omega$ for every $c<0$; it is not constant since $\Omega$ is not compact.
	This is impossible, since $\Omega$ is Oka \cite{Gromov1989JAMS} and Oka manifolds carry no nonconstant plurisubharmonic functions bounded above \cite[Proposition~7.1.9]{Forstneric2017}.
\end{proof}

\section*{Acknowledgments}
This work was supported by the Japan Society for the Promotion of Science (JSPS) KAKENHI Grant Numbers JP21K20324 and JP24K16919.

\section*{AI disclosure}
Generative AI tools were used to assist with the preparation and revision of this manuscript.
The author takes full responsibility for its content.

\raggedbottom

\end{document}